\tikzstyle{v} = [circle, draw, inner sep=2pt, minimum size=3pt, fill=black]
\tikzset{square matrix/.style={
    matrix of nodes,
    column sep=-\pgflinewidth, row sep=-\pgflinewidth,
    nodes={draw,
      minimum height=4.5pt,
      anchor=center,
      text width=4.5pt,
      align=center,
      inner sep=0pt
    },
  },
  square matrix/.default=1.2cm
}
\newtheorem{Theorem}{Theorem}[section]
\newtheorem{Definition}[Theorem]{Definition}
\newtheorem{Proposition}[Theorem]{Proposition}
\newtheorem{Corollary}[Theorem]{Corollary}
\newtheorem{Remark}[Theorem]{Remark}
\newtheorem{Problem}[Theorem]{Problem}
\DeclareMathOperator{\leaf}{leaf}
\DeclareMathOperator{\diam}{diam}
\begin{document}

\title{Total outer-connected domination number of middle graphs}

\author[F. Kazemnejad]{Farshad Kazemnejad}
\address{Farshad Kazemnejad, Department of Mathematics, Faculty of Basic Sciences, Ilam University, P.O. Box 69315-516, Ilam, Iran.}
\email{kazemnejad.farshad@gmail.com}
\author[B. Pahlavsay]{Behnaz Pahlavsay}
\address{Behnaz Pahlavsay, Department of Mathematics, Hokkaido University, Kita 10, Nishi 8, Kita-Ku, Sapporo 060-0810, Japan.}
\email{pahlavsayb@gmail.com}
\author[E. Palezzato]{Elisa Palezzato}
\address{Elisa Palezzato, Department of Mathematics, Hokkaido University, Kita 10, Nishi 8, Kita-Ku, Sapporo 060-0810, Japan.}
\email{palezzato@math.sci.hokudai.ac.jp}
\author[M. Torielli]{Michele Torielli}
\address{Michele Torielli, Department of Mathematics and Statistics, Northern Arizona University, 801 S Osborne Drive
Flagstaff, AZ 86011, USA.}
\email{michele.torielli@nau.com}

\date{\today}

\begin{abstract}
 In this paper, we study the total outer-connected domination number of the middle graph of a simple graph and we obtain tight bounds for this number in terms of the order of the middle graph. We also compute the total outer-connected domination number of some families of graphs, explicitly. Moreover, some Nordhaus-Gaddum-like relations are presented for the total outer-connected domination number of middle graphs.
\\[0.2em]

\noindent
Keywords: Total Outer-Connected Domination number, Total Domination number, Middle graph, Nordhaus-Gaddum-like relation.
\\[0.2em]

\noindent 
\end{abstract}
\maketitle
\section{\bf Introduction}

Consider $G=(V(G),E(G))$ a simple graph. A \emph{dominating set} of $G$ is a set $D\subseteq V(G)$
such that  $N_G[v]\cap D\neq \emptyset$, for all $v\in V(G)$. The \emph{domination number} of $G$ is the minimum cardinality of a dominating set of $G$ and it is denoted by $\gamma(G)$. 
 Similarly, a \emph{total dominating set} of $G$ is a set $D\subseteq V(G)$ such that for each $v\in V(G)$, $N_G(v)\cap
D\neq \emptyset$. The \emph{total domination number $\gamma_t(G)$} of $G$ is the minimum cardinality of a total dominating set of $G$.

The previous two notions play a central role in graph theory and the literature on the subject is vast, see for example \cite{HHS5,HHS6,HeYe13}, \cite{farsom}, \cite{3totdominrook} and \cite{dominLatin}. Since the introduction of the domination number of a graph, many variation have been introduced. The one pertinent to this paper is the following.

 \begin{Definition}[\cite{tocds1}] 
 A set $D \subseteq V(G)$ is a total outer-connected dominating set of $G$ if $D$ is total dominating, and the induced subgraph $G[V(G)-D]$ is a connected graph. The total outer-connected domination number of $G$, denoted by $\gamma _{tc}(G)$, is the minimum cardinality of a total outer-connected dominating set of $G$.
\end{Definition}

The concept of total outer-connected domination has also been studied in \cite{tocds2,tocds3}.

Following our previous work \cite{KPPT2022connect, DSFBME, DSFBME2, KPPT2021color}, in this paper we study the total outer-connected domination number of the middle graph of a simple graph.


\begin{Definition}[\cite{HamYos}]
 The middle graph $M(G)$ of a graph $G$ is the graph whose vertex set is $V(G)\cup E(G)$ and two vertices $x, y$ in the vertex set of $M(G)$ are adjacent in $M(G)$ in case one the following holds
 \begin{enumerate}
 \item $x, y\in E(G)$ and $x, y$ are adjacent in $G$;
 \item $x\in V (G)$, $y\in E(G)$, and $x, y$ are incident in $G$. 
 \end{enumerate}
 \end{Definition}
 

In order to avoid confusion throughout the paper, we fix a ``standard'' notation for the vertex set and the edge set of the middle graph $M(G)$. Assume $V(G)=\{v_1,\dots, v_n\}$, then we set $V(M(G))=V(G)\cup \mathcal{M}$, where $\mathcal{M}=\{m_{ij}~|~ v_iv_j\in E(G)\}$ and $E(M(G))=\{v_im_{ij},v_jm_{ij}~|~ v_iv_j\in E(G)\}\cup E(L(G)) $, where $L(G)$ is the line graph of $G$.


The paper proceeds as follows. In Section 2, first we present some upper and lower bounds for $\gamma _{tc}(M(G))$ in terms of the order of the graph $G$, and we compute explicitly $\gamma _{tc}(M(G))$ for several known families of graphs.  In Section 3, we study the total outer-connected domination number of the middle graph of trees and we compute explicitly $\gamma _{tc}(M(G))$ for several known families of trees.
 In Section 4, we compute the total outer-connected domination number of the middle graphs of some graphs obtained using operation on graphs. Finally, in the last Section we present some Nordhaus-Gaddum like relations for the total outer-connected domination number of middle graphs.
 
 Throughout this paper, we use standard notation for graphs and we assume that each graph is non-empty, finite, undirected and simple. We refer to \cite{bondy2008graph} as a general reference on graph theory. 

\section{\bf Total outer-connected domination number of middle graphs}

We first recall the following result from \cite{DSFBME2} which is useful for our investigation.
\begin{Theorem}\label{corol:mintotdominfamily} 
	Let $G$ be a graph with $n\ge3$ vertices. If $G$ has a subgraph isomorphic to $P_n$, then $$\gamma_t(M(G))=\lceil\frac{2n}{3}\rceil.$$
\end{Theorem}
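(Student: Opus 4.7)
The plan is to establish matching upper and lower bounds for $\gamma_t(M(G))$ by combining an explicit construction with a structural analysis of an arbitrary total dominating set. Relabeling if necessary, I may assume the copy of $P_n$ guaranteed by the hypothesis uses the vertices in the order $v_1 v_2 \cdots v_n$, so that all the edge-vertices $m_{i,i+1}$ for $1 \leq i \leq n-1$ are available in $M(G)$.

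For the upper bound I would exhibit a total dominating set that uses only these path edge-vertices, grouped two-by-two. Writing $n = 3k + r$ with $r \in \{0,1,2\}$, I take
$$
D \;=\; \bigcup_{i=1}^{k}\{m_{3i-2,\,3i-1},\, m_{3i-1,\,3i}\},
$$
to which I append $\{m_{3k,\,3k+1}\}$ if $r=1$ and $\{m_{3k,\,3k+1},\, m_{3k+1,\,3k+2}\}$ if $r=2$. A direct check confirms that every $v_i$ is adjacent in $M(G)$ to some $m \in D$ via an incident path-edge, and every $m \in D$ has a partner in $D$ coming from an adjacent path-edge sharing an endpoint. Counting gives $|D| = \lceil 2n/3 \rceil$.

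For the lower bound, let $D$ be any total dominating set of $M(G)$ and decompose it as $D = A \cup B$ with $A = D \cap V(G)$ and $B = D \cap \mathcal{M}$. Since $V(G)$ is independent in $M(G)$, every $v_i$, whether or not it lies in $A$, must be adjacent to some element of $B$; hence the set $F \subseteq E(G)$ of edges labelled by $B$ is an edge cover of $G$, so the spanning subgraph $(V(G), F)$ has no isolated vertices. Let $C_1,\dots,C_s$ be its connected components, with vertex sets $V_\ell$ and edge sets $E_\ell$. I would then prove the local inequality
$$
|A \cap V_\ell| + |E_\ell| \;\geq\; \tfrac{2}{3}\,|V_\ell|
$$
by splitting into two cases. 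If $|E_\ell|\geq 2$, then since $C_\ell$ is connected and has at least two edges, every edge of $C_\ell$ shares an endpoint with another edge of $C_\ell$, so each $m \in B\cap C_\ell$ is already totally dominated without help from $A$; combined with $|E_\ell|\geq |V_\ell|-1$ and $|V_\ell|\geq 3$, this gives the local bound. If $|E_\ell|=1$, then $|V_\ell|=2$ and the single edge-vertex in $C_\ell$ must pick up its total-domination neighbor inside $A$, forcing $|A\cap V_\ell|\geq 1$. Summing over $\ell$ and using that the $V_\ell$ partition $V(G)$ yields $|D|\geq 2n/3$, and integrality upgrades this to $\lceil 2n/3\rceil$.

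The step I expect to be trickiest is the verification of the $|E_\ell|=1$ case: one must rule out the possibility that the unique $m \in B \cap C_\ell$ is totally dominated by some $B$-vertex lying \emph{outside} $C_\ell$. This reduces to the observation that the neighbors of an edge-vertex $m_{ij}$ in $M(G)\setminus\{v_i,v_j\}$ are exactly the edge-vertices labelling edges adjacent to $v_iv_j$, and any such $B$-neighbor would share a vertex with $C_\ell$, contradicting the fact that $C_\ell$ is an entire connected component of $(V(G), F)$. Once this point is pinned down, the case split is exhaustive and the per-component inequality sums cleanly into the claimed global bound.
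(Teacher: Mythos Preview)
The paper does not actually prove this theorem; it is quoted from \cite{DSFBME2} as a known result, so there is no in-paper argument to compare against.

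Your argument is essentially correct and self-contained. The lower bound via the edge-cover decomposition of $B$ into components $C_\ell$ is clean, and the delicate point you flag in the $|E_\ell|=1$ case is handled correctly: any $B$-neighbour of the lone edge-vertex would have to label an edge sharing an endpoint with it and would therefore lie in the same component of $(V(G),F)$.

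There is one small omission in the upper-bound verification. You check that every $v_i$ has a neighbour in $D$ and that every $m\in D$ has a partner in $D$, but a total dominating set of $M(G)$ must also dominate every edge-vertex $m_{ij}\notin D$, and these are not mentioned. The fix is one line: since (as you already observe) every $v_i$ is an endpoint of some path-edge whose label lies in $D$, any $m_{ij}\notin D$ shares the endpoint $v_i$ with such a $D$-edge, and the corresponding edge-vertex is then a neighbour of $m_{ij}$ in $M(G)$. With that addition the proof is complete.
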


We start our study of the total outer-connected domination number by describing a lower and an upper bound.

\begin{Theorem}\label{theo:TOCD<n+m-1}
Let $G$ be a connected graph with $n\ge3$ vertices. Then $$2|\leaf(G)| \le\gamma _{tc}(M(G))\le n+m-1$$
where $m$ is the size of $G$ and $\leaf(G)=\{v\in V(G)~|~\deg_G(v)=1\}$.
\end{Theorem}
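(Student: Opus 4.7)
The plan is to prove the two bounds separately. For the upper bound, I would exhibit an explicit total outer-connected dominating set. The natural candidate is $D = V(M(G)) \setminus \{v\}$ for any vertex $v \in V(G)$; this has size $n + m - 1$ since $|V(M(G))| = n + m$. I would then check the two defining properties: (i) total domination, which reduces to showing every vertex of $M(G)$ has a neighbor different from $v$. For $v$ itself this holds because $\deg_G(v) \ge 1$, giving a neighbor $m_{vw} \in D$. For any $u \in V(G) \setminus \{v\}$, the $M(G)$-neighbors $m_{uw}$ all lie in $\mathcal{M}\subseteq D$. For any $m_{ab} \in \mathcal{M}$, since $a\ne b$, at most one of $a,b$ equals $v$, so the other endpoint lies in $D$. (ii) The outer-connected condition is trivial since $M(G)[V(M(G))\setminus D] = M(G)[\{v\}]$ is a single vertex.

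For the lower bound, let $D$ be any total outer-connected dominating set. The key observation is the behavior of $D$ near a leaf $v \in \leaf(G)$ with unique $G$-neighbor $u$: in $M(G)$ the vertex $v$ has the \emph{unique} neighbor $m_{vu}$, so total domination of $v$ forces $m_{vu} \in D$. Moreover, if $v \notin D$, then inside $M(G)[V(M(G)) \setminus D]$ the vertex $v$ is isolated (its only $M(G)$-neighbor is in $D$), and since this induced subgraph must be connected we are forced to have $V(M(G)) \setminus D = \{v\}$, hence $|D| = n + m - 1$.

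I would then split into two cases. \emph{Case 1: some leaf $v$ is outside $D$.} Then $|D| = n+m-1$ by the above. Since $G$ is connected with $n \geq 3$, one has $m \ge n-1$ and $|\leaf(G)| \le n-1$, so
\[
|D| = n+m-1 \ge 2n-2 \ge 2|\leaf(G)|.
\]
\emph{Case 2: every leaf is in $D$.} Then $D$ contains both $\leaf(G) \subseteq V(G)$ and the set $\{m_{vu} : v \in \leaf(G), u \text{ the neighbor of } v\} \subseteq \mathcal{M}$. Since these two subsets are disjoint (one lies in $V(G)$, the other in $\mathcal{M}$) and the map $v \mapsto m_{vu}$ is injective on leaves (two distinct leaves can only share a leaf edge if $G = K_2$, which is excluded by $n \ge 3$), we obtain $|D| \ge 2|\leaf(G)|$.

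The main subtlety lies in Case 1: a naive count would only give $|\leaf(G)|$ leaf edges plus $|\leaf(G)|-1$ leaves inside $D$, i.e.\ $2|\leaf(G)|-1$. The trick is to recognize that whenever a leaf fails to be in $D$, the outer-connectedness condition collapses $V(M(G))\setminus D$ to that single leaf, which makes $|D|$ jump all the way up to $n+m-1$, comfortably dominating $2|\leaf(G)|$ under the hypothesis $n \ge 3$.
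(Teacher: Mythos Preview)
Your proof is correct and follows essentially the same approach as the paper: the upper bound comes from deleting a single vertex of $V(G)$ from $V(M(G))$, and the lower bound from observing that each leaf and its incident edge-vertex must lie in any total outer-connected dominating set. In fact your Case~1 analysis is more careful than the paper's, which simply asserts $\leaf(G)\subseteq D$ ``because otherwise $M(G)-D$ is disconnected'' without addressing the possibility that $V(M(G))\setminus D$ consists of that single leaf.
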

\begin{proof} 
	Assume $V(G)=\{v_1, \dots, v_n\}$. Then $V(M(G))=V(G) \cup \mathcal{M}$ where $\mathcal{M}=\{m_{ij}~|~ v_iv_j\in E(G)\}$. Since if $v \in V(G)$, 
	then $D=V(M(G))\setminus \{v\}$ is a total outer-connected dominating set of $M(G)$, we have $$\gamma _{tc}(M(G))\le |\mathcal{M}|+|V(G)|-1= n+m-1.$$
	\vspace{0.02cm}
	On the other hand, let $D$ be  a total outer-connected dominating set of $M(G)$ and $\mathcal{D}_{1}=\{m_{ij}~|~ v_i \in \leaf(G)~ \mbox{ or }~ v_j \in \leaf(G)\}$. Since $D$ is a total dominating set of $M(G)$, then $\mathcal{D}_{1} \subseteq D$. Obviously, $\leaf(G) \subseteq D$ because otherwise $M(G)-D$ is disconnected, which is a contradiction. Hence
	$$|D| \ge |\mathcal{D}_{1}|+|\leaf(G)|=|\leaf(G)|+|\leaf(G)|=2|\leaf(G)|.$$
	This implies that 
	$\gamma _{tc}(M(G))\ge 2|\leaf(G)|.$
\end{proof}
In the next result we calculate the total outer-connected dominating set of the middle graph of a cycle graph.
\begin{Theorem}\label{tOCDcycle}
	For any cycle $C_n$ of order $n \geq 3$,  $$\gamma _{tc}(M(C_n))=2n-3.$$
\end{Theorem}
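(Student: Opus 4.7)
The plan is to prove equality by establishing matching upper and lower bounds on $|D|$ over all total outer-connected dominating sets $D$ of $M(C_n)$. Writing $S = V(M(C_n)) \setminus D$, the goal becomes: show that $|S| \leq 3$, and that $|S| = 3$ is achievable.

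For the upper bound, I would exhibit the explicit complement $S = \{v_1, m_{1,2}, v_2\}$. Since the induced subgraph on $S$ is the path $v_1 - m_{1,2} - v_2$, it is connected. The routine verification is that $D = V(M(C_n)) \setminus S$ is a total dominating set of $M(C_n)$: the vertices $v_1$, $v_2$ and $m_{1,2}$ are each dominated by $m_{n,1}$ or $m_{2,3}$ (which lie in $D$ because $n \geq 3$), and every other vertex of $M(C_n)$ retains enough neighbors in $D$. This yields $\gamma_{tc}(M(C_n)) \leq 2n-3$.

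For the lower bound, I would let $D$ be an arbitrary TOCD set and work inside the concrete structure of $M(C_n)$: each $v_i$ has exactly the two neighbors $m_{i-1,i}$ and $m_{i,i+1}$, and the set $V(G)$ is independent in $M(C_n)$. Total domination of each $v_i$ forces at least one of $m_{i-1,i}, m_{i,i+1}$ to belong to $D$, so no two cyclically consecutive $m$-vertices can lie in $S$. Two consequences follow: (i) in $G[S]$ any two $m$-vertices are non-adjacent, and (ii) each $v_i \in S$ has at most one $m$-neighbor in $S$, hence degree at most $1$ in $G[S]$.

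The main obstacle, though purely structural, is to extract from these constraints that $|S| \leq 3$. The two consequences say that $G[S]$ is a disjoint union of stars, one centered at each $m$-vertex of $S$ with its leaves drawn from the two incident $v$-vertices, together with possibly isolated $v$-vertices; crucially, no star can reach another, since the pendant $v$-vertices cannot bridge non-adjacent $m$-vertices. Connectivity of $G[S]$ therefore forces at most one $m$-vertex in $S$: if $S \cap \mathcal{M} = \emptyset$ then $S$ is independent so $|S| \leq 1$, while if $S \cap \mathcal{M} = \{m_{i,i+1}\}$ then $S \subseteq \{v_i, m_{i,i+1}, v_{i+1}\}$, giving $|S| \leq 3$. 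Combining this with the construction above yields the equality $\gamma_{tc}(M(C_n)) = 2n-3$.
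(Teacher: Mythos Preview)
Your proof is correct and follows essentially the same approach as the paper: both exhibit $\{v_1,m_{12},v_2\}$ as the complement of $D$ for the upper bound, and for the lower bound both use total domination at each $v_i$ to forbid two consecutive $m$-vertices outside $D$, then invoke connectivity of $M(C_n)-D$ to force at most one $m$-vertex outside $D$ and hence $|S|\le 3$. Your star-decomposition phrasing is a tidier reorganization of the paper's direct adjacent/non-adjacent case split on pairs of missing $m$-vertices, but the underlying argument is the same.
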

\begin{proof}
	To fix the notation, assume that $V(C_n)=V=\{v_1, \dots, v_n\}$ and $E(C_n)=\{v_1v_2, v_2v_3,\dots,v_{n-1}v_n,v_nv_1\}$. Then $V(M(C_n))=V(C_n)\cup \mathcal{M}$, where $\mathcal{M}=\{ m_{i(i+1)}~|~1\leq i \leq n-1 \}\cup\{m_{1n}\}$. Let $D$ be a total outer-connected dominating set of $M(C_{n})$.
	
	
	Suppose that $|\mathcal{M} \cap D| = n-2$, and let  $m_{ij}, m_{pq} \in \mathcal{M} $ such that $m_{ij}, m_{pq} \notin D $ for some $i,j,p,q$.
	If $m_{ij}$ and $m_{pq}$ are adjacent in $M(G)$, without loss of generality, we can assume that $m_{ij}=m_{i(i+1)}$ and $m_{pq}=m_{(i+1)(i+2)}$ for some $i$. This implies that $N_{M(G)}(v_{i+1})\cap D=\emptyset$, which is a contradiction. If  $m_{ij}$ and $m_{pq}$ are non-adjacent in $M(G)$, then $m_{ij}$ and $m_{pq}$ are non-adjacent in $M(G)-D$, and there is not any path between $m_{ij}$ and $m_{pq}$  in $M(G)-D$, and so $M(G)-D$ is disconnected, which is a contradiction. This implies that $n\ge |\mathcal{M} \cap D| \ge n-1.$
	
	If $| \mathcal{M} \cap D| = n$, since $V$ is an independent set in $M(G)$ and $M(G)-D$ is connected, then we have that $|V \cap D| \ge n-1 > n-2.$ This implies that
$$|D| \ge |V \cap D|+|\mathcal{M} \cap D| \ge 2n-1>2n-3.$$

Similarly, if $| \mathcal{M} \cap D| = n-1$, then there exists a vertex $m_{i(i+1)} \in \mathcal{M}$ such that  $m_{i(i+1)} \notin D$. Assume that $|V \cap D| < n-2$. Then there exist at least three vertices $v_p, v_q, v_r \in V$ such that $v_p, v_q, v_r \notin D$. Since $N_{M(G)}(m_{i(i+1)})\cap V=\{v_i , v_{i+1}\}$, then there exists at least a vertex $v \in \{v_p, v_q, v_r\}$ such that $v \notin D$ and there is no path between $v$ and $m_{i(i+1)}$ in $M(G)-D$, which is a contradiction. This shows that $|V \cap D| \ge n-2.$ 

Putting the previous two inequalities together we obtain that
$$|D| \ge |V \cap D|+|\mathcal{M} \cap D| \ge 2n-3.$$
Now since $D=(V \cup \mathcal{M}) \setminus \{v_1, v_2, m_{12}\}$ is  a total-outer connected dominating set of $M(G)$ with $|D|=2n-3$, we have $\gamma _{tc}(M(G))=2n-3.$
	
\end{proof}

In the next result, we calculate the total outer-connected domination number of the middle graph of a complete graph. Since $K_3$ is isomorphic to $C_{3}$, then $\gamma _{tc}(M(K_{3}))=\gamma _{tc}(M(C_{3}))=3$ by Theorem \ref{tOCDcycle}.
\begin{Theorem}\label{tocdcompletegraph}
	For any complete graph $ K_{n}$ of order $n \geq 4$, we have
	$$\gamma _{tc}(M(K_{n}))=\lceil 2n/3\rceil$$
\end{Theorem}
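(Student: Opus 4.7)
The plan is to establish matching bounds. For the lower bound, observe that $K_n$ contains a Hamiltonian path $P_n$ as a subgraph, so Theorem~\ref{corol:mintotdominfamily} yields $\gamma_t(M(K_n))=\lceil 2n/3\rceil$. Since every total outer-connected dominating set is in particular a total dominating set, this immediately gives $\gamma_{tc}(M(K_n))\ge \lceil 2n/3\rceil$.

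For the upper bound, I would construct an explicit total outer-connected dominating set $D$ of size $\lceil 2n/3\rceil$ concentrated on the Hamiltonian path $v_1v_2\cdots v_n$ of $K_n$. Take $D$ to be the ``two-on, one-off'' chain of edge-vertices $\{m_{12},m_{23},m_{45},m_{56},m_{78},m_{89},\ldots\}$, with the tail adjusted to the residue of $n$ modulo $3$: for $n=3k$ stop after the pair $m_{(3k-2)(3k-1)},m_{(3k-1)(3k)}$; for $n=3k+1$ append $m_{(3k)(3k+1)}$; for $n=3k+2$ append $m_{(3k)(3k+1)},m_{(3k+1)(3k+2)}$. In each case a direct count gives $|D|=\lceil 2n/3\rceil$, and a routine verification shows that $D$ is total dominating: every $v_i$ is incident to some $m_{ij}\in D$, and consecutive elements of $D$ share an index and are hence adjacent inside $L(K_n)$.

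The main step is then to show that the induced subgraph on $V(M(K_n))\setminus D$ is connected. I would proceed in two stages. First, each $v_i$ appears in at most two elements of $D$ (only $m_{(i-1)i}$ and $m_{i(i+1)}$ can possibly lie in $D$), so of the $n-1$ edge-vertices $m_{ij}$ adjacent to $v_i$ at least $n-3\ge 1$ survive in $\mathcal{M}\setminus D$; in particular every $v_i$ has a neighbor in $\mathcal{M}\setminus D$. Second, introduce the auxiliary graph $H$ on vertex set $V(K_n)$ whose edges are precisely the pairs $v_iv_j$ with $m_{ij}\notin D$. Since every element of $D$ has the form $m_{i(i+1)}$, the graph $H$ retains every ``long'' edge $v_iv_j$ with $|i-j|\ge 2$, and these alone already form a connected subgraph of $K_n$ for $n\ge 4$. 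Any path $v_i-v_\ell-v_k$ in $H$ (of length at most $2$) then lifts to the walk $v_i-m_{i\ell}-v_\ell-m_{\ell k}-v_k$ in $M(K_n)\setminus D$, and any $m_{ij}\in\mathcal{M}\setminus D$ attaches to this skeleton through the edge $m_{ij}-v_i$. Hence $M(K_n)\setminus D$ is connected and $\gamma_{tc}(M(K_n))\le\lceil 2n/3\rceil$.

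The routine work lies in the case-by-case definition of $D$ and checking total domination; the only real obstacle is the outer-connectedness check, and that reduces cleanly to the connectedness of the auxiliary graph $H$, which in turn follows from the observation that $D$ removes only ``consecutive'' edges from $K_n$.
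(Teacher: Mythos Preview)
Your argument is correct and follows essentially the same route as the paper: both use Theorem~\ref{corol:mintotdominfamily} for the lower bound and the identical ``two-on, one-off'' set of path edge-vertices (with the same residue-dependent tail) for the upper bound. In fact you go further than the paper, which simply asserts that $D$ is a total outer-connected dominating set; your auxiliary-graph argument for the connectedness of $M(K_n)\setminus D$ is a clean way to fill in that verification (the only imprecision is the phrase ``consecutive elements of $D$ share an index''---$m_{23}$ and $m_{45}$ do not---but the intended claim, that $G[D]$ has no isolated vertex, is of course true).
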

\begin{proof}
	To fix the notation, assume that $V(M(K_n))=V(K_n) \cup \mathcal{M}$ where $V(K_n)=\{v_1, \dots, v_n\}$ and $\mathcal{M}=\{m_{ij} ~|~v_i v_j \in E(K_n)\}$. Let $D$ be a total outer-connected dominating set of $M(K_n)$. By Theorem \ref{corol:mintotdominfamily} $\gamma _{tc}(M(K_{n}))\ge \lceil 2n/3\rceil$.
Set $$S=\{m_{(3i+1)(3i+2)}, m_{(3i+2)(3i+3)}~|~ 0 \le i \le \lfloor n/3 \rfloor-1\}.$$
Now since
$$D= 
\begin{cases}
S & \mbox{if }n\equiv 0 \pmod{3},\\ 
S \cup \{m_{(n-1)n}\} & \mbox{if }n\equiv 1 \pmod{3},\\ 
S \cup \{m_{(n-2)(n-1)},m_{(n-1)n}\}  & \mbox{if }n\equiv 2 \pmod{3}.
\end{cases}$$
 is a total-outer connected dominating set of $M(K_n)$ with $|D|=\lceil 2n/3\rceil$ we have $ \gamma_{tc}(M(K_n)) = \lceil 2n/3\rceil.$
\end{proof}
In the next result, we calculate the total outer-connected domination number of the middle graph of a wheel graph. Since $W_4$ is isomorphic to $K_{4}$, then $\gamma _{tc}(M(W_{4}))=\gamma _{tc}(M(K_{4}))=4$ by Theorem \ref{tocdcompletegraph}.
\begin{Theorem} \label{tocd(M(W))} For any wheel $W_n$ of order $n\geq 5$, 
	$$ \gamma_{tc}(M(W_n)) = \lceil 2n/3\rceil.$$
\end{Theorem}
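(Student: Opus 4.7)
The lower bound $\gamma_{tc}(M(W_n))\ge\lceil 2n/3\rceil$ is immediate from Theorem~\ref{corol:mintotdominfamily}: $W_n$ contains a Hamiltonian path (the hub followed by the rim traversed in order), in particular a subgraph isomorphic to $P_n$, so $\gamma_t(M(W_n))=\lceil 2n/3\rceil$; and every total outer-connected dominating set is in particular a total dominating set, so $\gamma_{tc}(M(W_n))\ge\gamma_t(M(W_n))=\lceil 2n/3\rceil$.

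For the matching upper bound I would exhibit an explicit total outer-connected dominating set $D$ of $M(W_n)$ of cardinality $\lceil 2n/3\rceil$. Label the hub by $v_1$ and the rim by the cycle $v_2v_3\cdots v_nv_2$, and write $s_i:=m_{1i}$ for $2\le i\le n$ for the spoke middle-graph vertices, together with $c_j:=m_{j(j+1)}$ for $2\le j\le n-1$ and $c_n:=m_{2n}$ for the rim-edge middle-graph vertices. The construction applies the consecutive-pair pattern from the standard minimum total dominating set of $M(P_n)$ (as in the proof of Theorem~\ref{corol:mintotdominfamily}) to the Hamiltonian path $v_1v_2\cdots v_n$, whose edge-vertices in $M(W_n)$ are $s_2,c_2,c_3,\ldots,c_{n-1}$. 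Concretely, let
\[
S\ :=\ \{s_2,\,c_2\}\ \cup\ \bigl\{c_{3i+1},\,c_{3i+2}\ :\ 1\le i\le \lfloor n/3\rfloor-1\bigr\},
\]
and set
\[
D\ :=\ \begin{cases}
S, & n\equiv 0\pmod3,\\
S\cup\{c_{n-1}\}, & n\equiv 1\pmod3,\\
S\cup\{c_{n-2},\,c_{n-1}\}, & n\equiv 2\pmod3.
\end{cases}
\]
A direct count yields $|D|=\lceil 2n/3\rceil$.

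Verifying that $D$ is total dominating is routine: $s_2$ dominates the hub $v_1$ and, through the clique of spokes at $v_1$, every spoke $s_i$ with $i\ge 3$; the pair $\{s_2,c_2\}$ is adjacent in $M(W_n)$ (they share $v_2$), so each dominates the other; the consecutive-pair pattern of rim-edge vertices dominates every remaining rim vertex and path-cycle-edge vertex exactly as for $M(P_n)$; and the wrap-around vertex $c_n$ is adjacent to both $s_2$ and $c_2$ through $v_2$, hence is dominated.

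The principal obstacle is showing that $M(W_n)\setminus D$ is connected. The key observation is that $\{v_1\}\cup\{s_3,\ldots,s_n\}$ lies entirely in the complement (only $s_2$ among the spokes is placed in $D$) and spans a connected subgraph, since $v_1$ is adjacent to every $s_i$ and the $s_i$ are pairwise adjacent in $M(W_n)$ as they share $v_1$. Every remaining vertex of the complement then attaches to this backbone: each rim vertex $v_i$ with $i\ge 3$ is adjacent to $s_i\notin D$; the rim vertex $v_2$ reaches the backbone via $c_n\notin D$, which is adjacent to $s_n$; and each rim-edge vertex $c_j\notin D$ is adjacent to one of $s_j$ or $s_{j+1}$, both outside $D$ because only $s_2$ of the spokes is in $D$. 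Working through the three residue classes of $n$ modulo $3$ and verifying these adjacencies case-by-case, with special care for the wrap-around edge $c_n$ and the residual cycle-edges adjoined to $S$, is the delicate but mostly mechanical heart of the argument.
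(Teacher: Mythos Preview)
Your proof follows the same strategy as the paper's: the lower bound comes from Theorem~\ref{corol:mintotdominfamily} via a Hamiltonian path in $W_n$, and the upper bound from an explicit total outer-connected dominating set of size $\lceil 2n/3\rceil$ built from consecutive pairs of edge-vertices along that path. Your construction is in fact more careful than the paper's: by including the spoke vertex $s_2$ you ensure the hub is dominated, whereas the paper's set $D$ is copied verbatim from the $K_n$ proof and, under its own labeling with hub $v_0$, consists only of rim-edge vertices (indeed it even lists the nonexistent $m_{(n-1)n}$) and so does not dominate $v_0$; moreover, your explicit verification of connectedness of $M(W_n)\setminus D$ via the spoke backbone $\{v_1,s_3,\ldots,s_n\}$ supplies a step the paper omits entirely.
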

\begin{proof}
	To fix the notation, assume $V(W_n)=V=\{v_0,v_1,\dots, v_{n-1}\}$ and $E(W_n)=\{v_0v_1,v_0v_2,\dots, v_0v_{n-1}\}\cup\{v_1v_2, v_2v_3,\dots,v_{n-1}v_1\}$. Then we have $V(M(W_n))=V(W_n)\cup \mathcal{M}$, where $\mathcal{M}=\{ m_{0i}~|~1\leq i \leq n-1 \}\cup\{ m_{i(i+1)}~|~1\leq i \leq n-1 \}\cup\{m_{1(n-1)}\}$.
By Theorem \ref{corol:mintotdominfamily}, $\gamma _{tc}(M(W_{n}))\ge \lceil 2n/3\rceil$.
Set $$S=\{m_{(3i+1)(3i+2)}, m_{(3i+2)(3i+3)}~|~ 0 \le i \le \lfloor n/3 \rfloor-1\}.$$
Now since
$$D= 
\begin{cases}
S & \mbox{if }n\equiv 0 \pmod{3},\\ 
S \cup \{m_{(n-1)n}\} & \mbox{if }n\equiv 1 \pmod{3},\\ 
S \cup \{m_{(n-2)(n-1)},m_{(n-1)n}\}  & \mbox{if }n\equiv 2 \pmod{3}.
\end{cases}$$
is a total-outer connected dominating set of $M(W_n)$ with $|D|=\lceil 2n/3\rceil$, we have $ \gamma_{tc}(M(W_n)) = \lceil 2n/3\rceil.$
\end{proof}
As an immediate consequence of Theorems \ref{corol:mintotdominfamily}, \ref{tocdcompletegraph} and \ref{tocd(M(W))}, we have the following result.
\begin{Corollary}
 For any $n \ge 4$, there exists a connected graph $G$ of order $n$ with $$ \gamma_{tc}(M(G)) = \gamma_{t}(M(G)).$$
\end{Corollary}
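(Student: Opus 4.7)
The plan is to exhibit, for every $n \geq 4$, a specific connected graph $G$ of order $n$ for which both invariants coincide, and then to verify the equality by plugging into the theorems already proved in this section together with Theorem \ref{corol:mintotdominfamily}.

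The natural candidate is simply $G = K_n$. First, $K_n$ is connected and has exactly $n$ vertices. Second, $K_n$ trivially contains a Hamiltonian path $P_n$ as a subgraph (any ordering of its vertices works, since all edges are present). Therefore Theorem \ref{corol:mintotdominfamily} applies and yields $\gamma_t(M(K_n)) = \lceil 2n/3 \rceil$. On the other hand, Theorem \ref{tocdcompletegraph} gives $\gamma_{tc}(M(K_n)) = \lceil 2n/3 \rceil$ for every $n \geq 4$. Combining these two identities produces $\gamma_{tc}(M(K_n)) = \gamma_t(M(K_n))$, which is precisely what the corollary claims.

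There is essentially no obstacle here: the corollary is flagged as an immediate consequence of the three cited theorems, and the argument amounts to observing that $K_n$ (or, equivalently, $W_n$ for $n \geq 5$ together with $K_4$ for $n=4$) lies simultaneously in the scope of a theorem computing $\gamma_{tc}(M(G))$ and in the scope of Theorem \ref{corol:mintotdominfamily} via the Hamiltonian path $P_n$. The only small point worth mentioning explicitly in the write-up is that the hypothesis $n \geq 4$ of Theorem \ref{tocdcompletegraph} matches the range $n \geq 4$ required by the corollary, so a single family $\{K_n\}_{n \geq 4}$ suffices without needing case splits.
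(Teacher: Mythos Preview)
Your proof is correct and matches the paper's own reasoning: the corollary is stated there as an immediate consequence of Theorems~\ref{corol:mintotdominfamily}, \ref{tocdcompletegraph} and \ref{tocd(M(W))}, and choosing $G=K_n$ exactly as you do (using only the first two of these) already suffices. The only cosmetic difference is that the paper also mentions the wheel $W_n$ as an alternative witness, which you note parenthetically as well.
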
	
In the next theorem, we calculate the total outer-connected domination number of the middle graph of a complete bipartite graph $K_{n_1,n_2}$. If $n_1=1$, we will compute $\gamma_{tc}(M(K_{1,n_2}))$ in Corollary \ref{tocdK1n=2n-2}. Moreover, since $K_{2,2}$ is isomorphic to $C_{4}$, we have $\gamma _{tc}(M(K_{2,2}))=\gamma _{tc}(M(C_{4}))=5$ by Theorem \ref{tOCDcycle}.
\begin{Theorem}\label{toutcompletebipartitegr}
	Let $K_{n_1,n_2}$ be the complete bipartite graph with $n_2\geq n_1 \geq 2$ and $(n_1,n_2)\neq (2,2)$. Then
	$$\gamma_{tc}(M(K_{n_1,n_2}))=
	\begin{cases}
	n_{2}+3  & \mbox{if } ~n_1= 2,\\ 
	5  & \mbox{if }  ~~(n_1,n_2)= (3,3),\\ 
	n_2  & \mbox{otherwise}.
	\end{cases}$$
\end{Theorem}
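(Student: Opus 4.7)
The proof proceeds by cases matching the three branches of the formula, with each case treated by a matched upper/lower bound argument. Set $V(K_{n_1,n_2}) = A\sqcup B$ with $A=\{a_1,\ldots,a_{n_1}\}$, $B=\{b_1,\ldots,b_{n_2}\}$, and $\mathcal{M}=\{m_{ij}\}$; in $M(K_{n_1,n_2})$ the neighborhood of $a_i$ is the ``row clique'' $R_i=\{m_{ij}\}_j$, the neighborhood of $b_j$ is the ``column clique'' $C_j=\{m_{ij}\}_i$, and two vertices of $\mathcal{M}$ are adjacent iff they share a row or a column.

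For the case $n_1=2$, my candidate upper bound is $D=\{a_1,b_1,m_{21}\}\cup R_1$ with $|D|=n_2+3$; total domination (with $a_1$ covering $R_1$, $m_{21}$ covering $a_2$, etc.) and connectedness of the complement $\{a_2\}\cup(B\setminus\{b_1\})\cup(R_2\setminus\{m_{21}\})$ through $a_2$ and the clique $R_2\setminus\{m_{21}\}$ are routine. For the lower bound, $|D\cap C_j|\ge 1$ for every $j$ (to dominate $b_j$), with equality forced when $b_j\notin D$ (else $b_j$ is isolated in $V\setminus D$); partitioning the columns by whether $m_{1j}\in D$ only, $m_{2j}\in D$ only, or both, the complement $M(G)[V\setminus D]$ splits into an ``$a_1$-side'' and an ``$a_2$-side'' with no edges between, so outer-connectedness forces one of the two sides to be empty. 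This in turn forces a whole row of $\mathcal{M}$ into $D$, at least one of $a_1,a_2$ into $D$, and at least one ``full column'' (both $m_{1j^*},m_{2j^*}$ together with $b_{j^*}$) into $D$, summing to $|D|\ge n_2+3$.

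For $(n_1,n_2)=(3,3)$ the set $D=\{a_1,m_{11},m_{22},m_{23},m_{33}\}$ of cardinality $5$ is a TOCDS, verified directly: the complement is connected via the path $a_2\,m_{21}\,m_{31}\,a_3\,m_{32}\,m_{12}\,m_{13}$ together with the attached edges to $b_1,b_2,b_3$. Theorem~\ref{corol:mintotdominfamily} yields $\gamma_{tc}\ge\gamma_t=4$, and to rule out $|D|=4$ I would case-split on $|D\cap(A\cup B)|$ and on how $D\cap\mathcal{M}$ covers the three rows and three columns, showing in each subcase that either some $m$ fails to be totally dominated or some $a_i$ or $b_j$ is isolated in the complement. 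The remaining ``otherwise'' case is handled by the same template, with the upper bound coming from a small seed in $\mathcal{M}$ augmented by at most one vertex of $A$ (the extra room provided by $n_1\ge 3$ and $(n_1,n_2)\ne(3,3)$ is what allows the savings), and the lower bound combining Theorem~\ref{corol:mintotdominfamily} (applicable whenever $K_{n_1,n_2}$ admits a Hamiltonian path, i.e.\ $n_2\le n_1+1$) with the row/column covering analysis used in the $n_1=2$ case.

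The main obstacle is the lower bound in the $n_1=2$ case, which hinges on the structural dichotomy above: one must carefully enumerate the possible configurations of $D$ on the two rows of $\mathcal{M}$ and on $A\cup B$, and verify that outer-connectedness genuinely costs three extra vertices beyond the baseline ``one per column'' required just for total domination of $B$.
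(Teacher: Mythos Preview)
Your treatment of the cases $n_1=2$ and $(n_1,n_2)=(3,3)$ follows the paper's approach closely: an explicit TOCDS for the upper bound, and a case analysis for the lower bound. For $n_1=2$ the paper organizes the lower bound by the value of $|\mathcal{M}_1\cap D|+|\mathcal{M}_2\cap D|\in\{n_2,n_2+1,n_2+2\}$ rather than by your $a_1$-side/$a_2$-side dichotomy, but the underlying structure is the same; your upper-bound set and the paper's are mirror images of one another. Your $(3,3)$ construction coincides with the paper's, and both rule out $|D|=4$ by examining how $D$ meets the row sets $\mathcal{M}_i$.

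The genuine gap is the ``otherwise'' branch. For the lower bound there the paper does \emph{not} invoke Theorem~\ref{corol:mintotdominfamily}; it uses only the elementary observation that the closed neighborhoods $N[u_j]$, $1\le j\le n_2$, are pairwise disjoint, forcing any total dominating set to have size at least $n_2$. Your appeal to Theorem~\ref{corol:mintotdominfamily} is both unnecessary and problematic: when it applies (i.e.\ when $n_2\le n_1+1$) it gives $\gamma_t(M(K_{n_1,n_2}))=\lceil 2(n_1+n_2)/3\rceil$, which for $n_1=n_2\ge 4$ already exceeds $n_2$, so it cannot be combined with a matching upper bound of $n_2$. More importantly, the real content of this case is the \emph{upper bound}, and here your proposal is only a placeholder: ``a small seed in $\mathcal{M}$ augmented by at most one vertex of $A$'' is not a construction. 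The paper spends the bulk of its argument writing down explicit subsets $D\subseteq\mathcal{M}$ (using no vertices of $A\cup B$) in three regimes---$n_1=n_2$, $n_1+1\le n_2\le 2n_1-1$, and $n_2\ge 2n_1$---each further split by $n_1\bmod 3$, and asserting that these are TOCDS of size $n_2$. Without an explicit construction and its verification (total domination, no isolated vertex in $G[D]$, connected complement), your ``otherwise'' branch is incomplete.
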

\begin{proof}  To fix the notation, assume $V(K_{n_1,n_2})=\{v_1,\dots, v_{n_1},u_1,\dots, u_{n_2}\}$ and $E(K_{n_1,n_2})=\{v_iu_j ~|~1\leq i \leq n_1, 1\leq j \leq n_2\}$. Then $V(M(K_{n_1,n_2}))=V(K_{n_1,n_2})\cup \mathcal{M}$, where
	$\mathcal{M}=\{ m_{ij}~|~1\leq i \leq n_1, 1\leq j \leq n_2 \}. $
	Let $D$ be a total outer-connected dominating set of $M(K_{n_1,n_2})$. 	Since $D$ is a total dominating set for $M(K_{n_1,n_2})$ and $N_{M(K_{n_1,n_2})}[u_i] \cap N_{M(K_{n_1,n_2})}[u_j]=\emptyset$ for every $i,j \in \{1,\dots,n\}$, this implies that $\gamma_{tc}(M(K_{n_1,n_2}))\ge n_2$.
	
	 First, assume that $n_1=2$. Since $D=\{m_{11},u_{1}, v_{2}\}\cup\{m_{2i}~|~ 1\leq i \leq n_2\}$ is an total outer-connected dominating set of $M(K_{2,n_2})$ with $|D|=n_2+3$, This implies that $\gamma_{tc}(M(K_{2,n_2}))\le n_2+3$. We show that  $\gamma_{tc}(M(K_{2,n_2}))\ge n_2+3$.
	
	Set $\mathcal{M}_{i}=\{m_{ij}~|~1\leq j \leq n_2\}$ for $i=1,2$. Then $N_{M(K_{2,n_2})}[v_i] \cap D \neq \emptyset$ implies that $|\mathcal{M}_{i} \cap D| \ge 1$ for $i=1,2$. Also $N_{M(K_{2,n_2})}[u_j] \cap D \neq \emptyset$ implies that $|\mathcal{M} \cap D| \ge n_2$ for $1\leq j \leq n_2$. We claim that $|D| >n_2+2 $. Assume that $|D| \le n_2+2 $. Then $(|\mathcal{M}_{1} \cap D|, |\mathcal{M}_{2} \cap D|)=(x,y)$ where $x,y \ge 1$ and $n_2 \le x+y \le n_2+2$. 
	We consider three cases.
	
	Assume $x+y=n_{2}$. Then $|\mathcal{M} \cap D| =n_2$. Set $\mathcal{M}^{'}_{1}=D \setminus \mathcal{M}_{1}$, $\mathcal{M}^{'}_{2}=D \setminus \mathcal{M}_{2}$, $U_1=\{u_{t}~|~m_{1t} \in \mathcal{M}_{1} \}$ and $U_2=\{u_{k}~|~m_{2k} \in \mathcal{M}_{2}\}$. Now set $H_1=G[\mathcal{M}^{'}_{2} \cup U_1 \cup \{v_2\} ]$ and $H_2=G[\mathcal{M}^{'}_{1} \cup U_2 \cup \{v_1\} ]$. Then $H_1 , H_2 \subseteq M(K_{2,n_2})-D$ and $V(H_1) \cap V(H_2)=\emptyset $. This implies that $M(K_{2,n_2})-D$ is disconnected, which is a contradiction.
	
	Assume $x+y=n_{2}+1$. Then $|\mathcal{M} \cap D| =n_2+1$. This implies that $|D \cap \{v_1 , v_2\}| \le 1$. We consider three cases. First, let $v_1 , v_2 \notin D$. Then $N_{M(K_{2,n_2})}[u_j] \cap D \neq \emptyset$ for $1\leq j \leq n_2$ implies that there are not any path between $v_1$ and $v_2$ in $M(K_{2,n_2})-D$ which is a contradiction. Assume that  $v_1 \in D$ and  $v_2  \notin D$. Then $|U \cap D|=0$ and $y \ge 1$ implies that there exists at least a vertex $m_{2t} \in D$ for some $1\leq t \leq n_2$ such that there are not any path between $u_1$ and $u_t$ in $M(K_{2,n_2})-D$ which is a contradiction. Now let $v_1 \notin D$ and  $v_2  \in D$. Then $|U \cap D|=0$ and obviously there are not any path between  $v_1$ and $u_1$ in $M(K_{2,n_2})-D$ which is a contradiction.
	
	Assume $x+y=n_{2}+2$. Then $|\mathcal{M} \cap D| =n_2+2$. $\{m_{1t}, m_{2t}\} \subseteq N_{M(K_{2,n_2})}[u_t] \cap D \neq \emptyset$ implies that there are not any path between  $v_1$ and $v_2$ in $M(K_{2,n_2})-D$ which is a contradiction. Therefore $\gamma_{tc}(M(K_{2,n_2}))\ge n_2+3$.
	
	Now let $(n_1,n_2)= (3,3)$. By Theorem \ref{corol:mintotdominfamily} $\gamma_{t}(M(K_{3,3}))=4$. This implies that $\gamma_{tc}(M(K_{3,3})) \ge 4$. Assume that $|D|= 4$. Set $\mathcal{M}_{i}=\{m_{ij}~|~1 \le j \le 3\}$ for  $1 \le i \le 3$. We claim that $1 \le | \mathcal{M}_{i} \cap D| \le 2$.  $N_{M(K_{3,3})}(v_{i}) \cap D \neq \emptyset$ for $1 \le i \le 3$ implies that $| \mathcal{M}_{i} \cap D| \ge 1$. Now suppose that $| \mathcal{M}_{i} \cap D|=3$ for some $i$.  Since for every $j\neq i$, $| \mathcal{M}_{j} \cap D| \ge 1$, this implies that $|D| \ge 5$,  which is a contradiction. Now by assumption $|D| = 4$ and $1 \le | \mathcal{M}_{i} \cap D| \le 2$ for $1 \le i \le 3$, we conclude that there exsists a $\mathcal{M}_{i}$ we say $\mathcal{M}_{1}$ such that $| \mathcal{M}_{1} \cap D|=2$. This implies that $| \mathcal{M}_{2} \cap D|=|\mathcal{M}_{3} \cap D|=1$ and so $M(K_{3,3})-D$ is disconnected, which is a contradiction. Therefore $|D| \ge 5$.
	
	Since $D=\{ v_{1},m_{11},m_{22},m_{23},m_{33},\}$ is an total outer-connected dominating set of $M(K_{3,3})$ with $|D|=5=n_2+2$, This implies that $\gamma_{tc}(M(K_{3,3}))=5$. 
	\vskip 0.02cm
	 Finally, let $n_1\neq 2$ and $(n_1,n_2)\neq (3,3)$. Set $D_{1}=$ $$\{m_{(3i+1)(3i+1)}, m_{(3i+1)(3i+2)},m_{(3i+2)(3i+3)},m_{(3i+3)(3i+3)},~|~ 0 \le i \le \lfloor n/3 \rfloor-1\}$$
	 and $D_{2}=\{m_{(n_1)(n_1+i)}~|~ 1\leq i \leq n_2-n_{1}\}$.
	 \vskip 0.02cm
	 Assume that $n_1= n_2$. Then 
	 $$D= 
	 \begin{cases}
	 D_{1} & \mbox{if }n_1\equiv 0 \pmod{3},\\ 
	 D_{1} \cup \{m_{(n_1-1)n_1},m_{n_1 n_1} \} & \mbox{if }n_1\equiv 1 \pmod{3},\\ 
	 D_{1} \cup \{m_{(n_1-1)(n_1-1)},m_{(n_1-1)n_1},m_{n_1 n_1} \}  & \mbox{if }n_1\equiv 2 \pmod{3}.
	 \end{cases}$$
	 is a total outer-connected dominating set of $M(K_{n_1,n_2})$ with $|D|=n_2$. This implies that $\gamma_{tc}(M(K_{n_1,n_2}))= n_2$.

Assume that $n_1+1 \le  n_2 \le 2n_{1}-1$. Then 
$$D= 
\begin{cases}
D_{1} \cup D_{2} & \mbox{if }n_1\equiv 0 \pmod{3},\\ 
D_{1}  \cup D_{2} \cup \{m_{(n_1-1)n_1},m_{n_1 n_1} \} & \mbox{if }n_1\equiv 1 \pmod{3},\\ 
D_{1} \cup D_{2} \cup \{m_{(n_1-1)(n_1-1)},m_{(n_1-1)n_1},m_{n_1 n_1} \}  & \mbox{if }n_1\equiv 2 \pmod{3}.
\end{cases}$$
is a total outer-connected dominating set of $M(K_{n_1,n_2})$ with $|D|=n_2$. This implies that $\gamma_{tc}(M(K_{n_1,n_2}))= n_2$.

Assume now that $  n_2 \ge 2n_{1}$. Then 
$$D= \{m_{ii}, m_{i(n_{1}+i)} ~|~ 1\leq i \leq n_{1}\} \cup \{ m_{n_{1}(2n_{1}+i)} ~|~ 1\leq i \leq n_{2}-2n_{1}\}
$$
is a total outer-connected dominating set of $M(K_{n_1,n_2})$ with $|D|=n_2$. This implies that $\gamma_{tc}(M(K_{n_1,n_2}))= n_2$.
\end{proof}
\section{The middle graph of a tree}
If we assume that the graph is a tree, we can easily rewrite Theorem \ref{theo:TOCD<n+m-1} and obtain the following result.
\begin{Corollary}\label{leaf(T) le gamma _{tc}(M(T)) le 2n-2}
Let $T$ be a tree with $n \ge 2$ vertices. Then
	 $$2|\leaf(T)| \le\gamma _{tc}(M(T))\le 2n-2.$$
\end{Corollary}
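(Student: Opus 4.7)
The plan is to apply Theorem \ref{theo:TOCD<n+m-1} directly, using only the tree identity $m = n - 1$. That theorem already gives $2|\leaf(G)| \le \gamma_{tc}(M(G)) \le n + m - 1$ for every connected graph $G$ on $n \ge 3$ vertices with $m$ edges. A tree on $n$ vertices is connected and has exactly $n - 1$ edges, so substituting $m = n - 1$ collapses the upper bound to $n + (n-1) - 1 = 2n - 2$, while leaving the lower bound $2|\leaf(T)|$ unchanged. The chain $2|\leaf(T)| \le \gamma_{tc}(M(T)) \le 2n - 2$ therefore falls out as an immediate specialization for all trees with $n \ge 3$ vertices.

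It remains only to address the boundary case $n = 2$, which lies outside the hypothesis of Theorem \ref{theo:TOCD<n+m-1}. In that case $T = P_2$ and $M(T) \cong P_3$, so by direct inspection one has $\gamma_{tc}(M(T)) = 2 = 2n - 2$, matching the upper bound. (The lower bound degenerates here since $2|\leaf(P_2)| = 4$ exceeds $|V(M(P_2))| = 3$; this is the only reason one must be careful about interpreting the stated inequality at $n = 2$, and it shows that the essential content of the Corollary is really the regime $n \ge 3$.)

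I do not anticipate any real obstacle, as the proof is a one-line specialization of Theorem \ref{theo:TOCD<n+m-1}, together with a trivial verification for the edge case $n = 2$. In particular no new combinatorial construction of a total outer-connected dominating set of $M(T)$ is required beyond what already appears in the proof of Theorem \ref{theo:TOCD<n+m-1} (namely, $V(M(T)) \setminus \{v\}$ for any $v \in V(T)$, which has cardinality exactly $n + m - 1 = 2n - 2$ when $T$ is a tree).
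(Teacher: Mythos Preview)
Your approach is exactly the paper's: the corollary is stated without proof as an immediate rewriting of Theorem~\ref{theo:TOCD<n+m-1} using $m=n-1$ for trees. Your extra verification at $n=2$ is more than the paper does and in fact exposes a small inaccuracy in the stated hypothesis---the lower bound $2|\leaf(T)|$ fails for $T=P_2$, so the corollary is really only valid for $n\ge 3$.
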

As a consequence of Corollary \ref{leaf(T) le gamma _{tc}(M(T)) le 2n-2}, we have the following result.
\begin{Corollary}\label{tocdK1n=2n-2}
	For any star graph $ K_{1,n-1}$ with $n \geq 3$, we have
	$$\gamma _{tc}(M(K_{1,n-1}))=2n-2.$$
\end{Corollary}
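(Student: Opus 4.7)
The plan is to observe that this is an immediate consequence of Corollary \ref{leaf(T) le gamma _{tc}(M(T)) le 2n-2}, by a counting argument showing that the lower and upper bounds from that corollary coincide for the star.

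First, I would identify the leaf set of $K_{1,n-1}$. Since $n \ge 3$, the center vertex has degree $n-1 \ge 2$ and therefore is not a leaf, while every one of the remaining $n-1$ vertices has degree $1$. Hence $|\leaf(K_{1,n-1})| = n-1$.

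Next, I would plug this into Corollary \ref{leaf(T) le gamma _{tc}(M(T)) le 2n-2}. The tree $K_{1,n-1}$ has $n$ vertices, so the corollary yields
\[
2(n-1) \;=\; 2|\leaf(K_{1,n-1})| \;\le\; \gamma_{tc}(M(K_{1,n-1})) \;\le\; 2n-2.
\]
Since $2(n-1) = 2n-2$, the two bounds coincide, forcing $\gamma_{tc}(M(K_{1,n-1})) = 2n-2$.

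There is no real obstacle here; the whole argument rests on the corollary having already absorbed both the combinatorial work (every edge incident to a leaf contributes an edge-vertex that must lie in any total dominating set of the middle graph, and every leaf itself must lie in the set to keep the complement connected) and the trivial upper bound. The statement is tagged as a corollary precisely because the sharpness of the lower bound is realized by the star, where leaves saturate the vertex set up to a single exception.
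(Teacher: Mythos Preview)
Your proposal is correct and follows exactly the approach the paper intends: the paper states this corollary immediately after Corollary~\ref{leaf(T) le gamma _{tc}(M(T)) le 2n-2} with no separate proof, relying on the fact that $|\leaf(K_{1,n-1})|=n-1$ makes the lower and upper bounds $2|\leaf(T)|$ and $2n-2$ coincide. Your write-up merely spells out this one-line observation.
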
	

\begin{Remark}
 By Corollary \ref{tocdK1n=2n-2}, the inequalities described in Corollary \ref{leaf(T) le gamma _{tc}(M(T)) le 2n-2} are sharp.	
\end{Remark}

In the case of a tree $T$ that is not isomorphic to $ K_{1,n-1}$, we can describe an upper bound for $\gamma _{tc}(M(T))$.
\begin{Theorem}\label{T neq K_{1,n-1}}
Let $T \neq K_{1,n-1} $ be a tree of order $n \ge 4$. Then 
$$\gamma _{tc}(M(T)) \le 2n-4.$$
\end{Theorem}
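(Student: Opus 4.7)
Theorem \ref{theo:TOCD<n+m-1} already gives the bound $2n-2$ (since $m=n-1$ for a tree) by removing one vertex from $V(M(T))$; to sharpen this by $2$, my plan is to remove three vertices instead, specifically a pair of adjacent vertices in $T$ together with the edge-vertex joining them. The key observation is that because $T\neq K_{1,n-1}$ and $n\ge 4$, the diameter of $T$ is at least $3$: any diametrical path $v_0v_1\cdots v_d$ has $d\ge 3$, so the interior vertices $v_1$ and $v_2$ both have degree at least $2$ in $T$. Pick any such pair of adjacent non-leaves and call them $u,v$, and let $m_{uv}\in\mathcal{M}$ be the associated edge-vertex in $M(T)$.

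The candidate set is
\[
D \;=\; V(M(T))\setminus\{u,v,m_{uv}\},
\]
which has cardinality $(2n-1)-3 = 2n-4$. I would verify the three defining properties of a total outer-connected dominating set in turn. First, the induced subgraph $M(T)[\{u,v,m_{uv}\}]$ is the path $u$--$m_{uv}$--$v$, so $M(T)[V(M(T))-D]$ is connected. Second, $D$ dominates the three missing vertices: because $u$ is a non-leaf it has a neighbour $u'\neq v$ in $T$, giving $m_{uu'}\in D$ adjacent to $u$; symmetrically for $v$; and $m_{uu'}$ is adjacent to $m_{uv}$ in the line-graph part of $M(T)$. Third, $D$ is total dominating: any $w\in V(T)\setminus\{u,v\}$ is incident with some edge $wx$ of $T$, and $m_{wx}\neq m_{uv}$ lies in $D$; any $m_{xy}\in D$ has at least one endpoint $x\notin\{u,v\}$ (otherwise $m_{xy}=m_{uv}$), so $x\in D$ serves as the required neighbour.

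The only step that requires any real thought is locating the adjacent pair of non-leaves, and that reduces to the diameter argument above. The rest of the argument is bookkeeping: the key structural fact being used is that for a non-pendant edge $uv$ of $T$, the three-vertex set $\{u,v,m_{uv}\}$ induces a connected subgraph in $M(T)$ whose every vertex has a neighbour outside it, which is precisely the data needed to certify outer-connected total domination by its complement.
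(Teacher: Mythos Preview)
Your proposal is correct and is essentially the paper's own argument: the paper also observes that $T\ne K_{1,n-1}$ forces an induced $P_4:v_1v_2v_3v_4$ and then takes $D=V(M(T))\setminus\{v_2,m_{23},v_3\}$, which is exactly your construction with $u=v_2$, $v=v_3$. The only difference is presentational---your diameter formulation versus the paper's induced-$P_4$ formulation---and you supply the verification that $D$ is a total outer-connected dominating set, which the paper leaves implicit.
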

\begin{proof} 
	Let $T \neq K_{1,n-1} $ be a tree of order $n \ge 4$ with $V(T)=\{v_1, \dots, v_n\}$. Then $V(M(T))=V(T) \cup \mathcal{M}$ where $\mathcal{M}=\{m_{ij}~|~ v_iv_j\in E(T)\}$. 
	$T \neq K_{1,n-1} $ implies that $P_4 \subseteq T$ as an induced subgraph. Without loss of generality, let $P_4 : v_1v_2v_3v_4$ be an induced subgraph in $T$. Then $P_7 : v_1 m_{12}v_2 m_{23} v_3 m_{34} v_4$  is an induced subgraph in $M(T)$. Now since $D=V(M(T)) \setminus \{v_2, m_{23}, v_3\}$ is a total outer-connected dominating set of $M(T)$, we have $\gamma _{tc}(M(T)) \le 2n-4.$
\end{proof}
By Corollary \ref{tocdK1n=2n-2} and Theorem \ref{T neq K_{1,n-1}} we have the following result.
\begin{Corollary}
Let $T $ be a tree of order $n \ge 4$. Then 
$\gamma _{tc}(M(T)) = 2n-2$ if and only if $T = K_{1,n-1}$.
\end{Corollary}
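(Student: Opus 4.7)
The plan is to prove the corollary by a straightforward two-direction argument that packages together the two results just established, namely Corollary~\ref{tocdK1n=2n-2} (computing $\gamma_{tc}(M(K_{1,n-1}))=2n-2$) and Theorem~\ref{T neq K_{1,n-1}} (bounding $\gamma_{tc}(M(T))\le 2n-4$ for trees other than the star). Since these two facts exactly correspond to the two implications of the ``if and only if,'' no additional combinatorial construction is needed.

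For the \emph{if} direction, I would simply observe that if $T=K_{1,n-1}$ then Corollary~\ref{tocdK1n=2n-2} gives $\gamma_{tc}(M(T))=2n-2$ directly. For the \emph{only if} direction, I would argue by contrapositive: assume $T\neq K_{1,n-1}$, so $T$ is a tree on $n\ge 4$ vertices that is not a star. Then Theorem~\ref{T neq K_{1,n-1}} applies and yields $\gamma_{tc}(M(T))\le 2n-4<2n-2$, contradicting $\gamma_{tc}(M(T))=2n-2$. Hence $T$ must equal $K_{1,n-1}$.

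There is essentially no obstacle to overcome here, since both of the needed bounds are already proved in the excerpt; the corollary is a clean assembly. The only small point to be careful about is the hypothesis $n\ge 4$, which is exactly what lets us invoke Theorem~\ref{T neq K_{1,n-1}} (that theorem requires $n\ge 4$ to guarantee the existence of an induced $P_4$ inside any non-star tree). Thus the statement of the corollary matches the hypotheses of the two cited results without any gap, and the proof reduces to one or two sentences citing them.
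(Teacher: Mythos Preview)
Your proposal is correct and follows exactly the approach the paper takes: the corollary is stated immediately after Corollary~\ref{tocdK1n=2n-2} and Theorem~\ref{T neq K_{1,n-1}} with the one-line justification ``By Corollary~\ref{tocdK1n=2n-2} and Theorem~\ref{T neq K_{1,n-1}} we have the following result,'' and no further proof is given. Your two-direction argument simply unpacks this citation, so nothing is missing or different.
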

\begin{Remark}
For any tree $T$ of order $n \ge 4$, $\gamma _{tc}(M(T)) \neq 2n-3$.
\end{Remark}
The next theorem computes the total outer-connected domination number of the middle graph of a path of order. Clearly, $\gamma _{tc}(M(P_{2}))=2$. Moreover, $P_3$ is isomorphic to $K_{1,2}$ and so $\gamma _{tc}(M(P_{3}))=\gamma _{tc}(M(K_{1,2}))=4$ by Corollary \ref{tocdK1n=2n-2}.
\begin{Theorem}\label{tocdpn=2n-4}
	For any path $ P_{n}$ of order $n \geq 4$, we have
	$$\gamma _{tc}(M(P_{n}))=2n-4.$$
\end{Theorem}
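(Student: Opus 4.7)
The upper bound $\gamma_{tc}(M(P_n)) \le 2n-4$ is immediate from Theorem \ref{T neq K_{1,n-1}}, since $P_n$ is a tree of order $n \ge 4$ not isomorphic to $K_{1,n-1}$. The main content is the matching lower bound, which I would establish by contradiction: suppose $D$ is a total outer-connected dominating set of $M(P_n)$ with $|D| \le 2n-5$, and let $S := V(M(P_n)) \setminus D$, so that $|S| \ge 4$.

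First I would identify the forced vertices of $D$. Total domination of the leaves $v_1$ and $v_n$ forces $m_{12}, m_{(n-1)n} \in D$; moreover, if $v_1 \in S$ then $v_1$ is isolated in $M(P_n)-D$ (its only neighbor $m_{12}$ lies in $D$), contradicting the connectivity of $S$ given $|S| \ge 2$, so $v_1, v_n \in D$ as well. Writing $S_m = S \cap \mathcal{M}$ and $S_v = S \cap V(P_n)$, the next step is to exploit the linear structure of $M(P_n)$: for any $2 \le l \le n-2$, deleting the edge-vertex $m_{l(l+1)}$ splits $M(P_n)$ into two components, namely the vertices with indices $\le l$ and those with indices $\ge l+1$. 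Since $S$ is connected and avoids $m_{12}$ and $m_{(n-1)n}$, either $S_m = \emptyset$, in which case $S$ is contained in the independent set $V(P_n)\setminus\{v_1,v_n\}$ and so $|S| \le 1$, a contradiction, or $S_m = \{m_{a(a+1)}, \ldots, m_{b(b+1)}\}$ is a contiguous subpath with $2 \le a \le b \le n-2$.

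Next I would bound $|S_v|$. Any $v_i \in S_v$ must have a neighbor in $S$ (by connectivity of $S$) and a neighbor in $D$ (by total domination); since $v_i$'s only neighbors in $M(P_n)$ are $m_{(i-1)i}$ and $m_{i(i+1)}$, exactly one of these lies in $S_m$. Matching against the subpath description of $S_m$ pins down $i \in \{a, b+1\}$, so $|S_v| \le 2$.

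The key step is bounding $|S_m|$. If $|S_m| \ge 2$ then $a < b$, and taking $j = a+1$ places both $m_{(j-1)j} = m_{a(a+1)}$ and $m_{j(j+1)} = m_{(a+1)(a+2)}$ in $S$; then $v_{a+1}$ has no neighbor in $D$, violating total domination. Hence $|S_m| \le 1$ and $|S| \le 3$, contradicting $|S| \ge 4$. The main obstacle is this last step, where the total domination requirement is played against the outer-connectedness condition through the fact that each interior $v_j$ has only two neighbors in $M(P_n)$; everything else is bookkeeping with the explicit description of the middle graph of a path.
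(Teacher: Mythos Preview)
Your proof is correct and follows essentially the same route as the paper's: both force $m_{12}, m_{(n-1)n}\in D$, show that at most one edge-vertex can lie outside $D$ (an adjacent missing pair kills total domination of the $v$-vertex between them; a non-adjacent missing pair kills connectivity of the complement), and then argue that at most two $V$-vertices can lie outside $D$. The only differences are cosmetic---you work with the complement $S$ under a contradiction hypothesis $|S|\ge 4$ and explicitly force $v_1,v_n\in D$, whereas the paper runs a direct case split on $|\mathcal{M}\cap D|\in\{n-2,n-1\}$.
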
	
\begin{proof}
	Let   $G= P_{n}$ be a path of order $n \geq 4$ with vertex set  $V(G)=\{v_1, \dots, v_n\}$. Then $V(M(G))=V(G) \cup \mathcal{M}$ where $\mathcal{M}=\{m_{i(i+1)}~|~ 1 \leq i \leq n-1\}$. Let $D$ be a total-outer connected dominating set of $M(G)$. 
	
	%
	Since $D$ is a total dominating set of $M(G)$ then $ N_{M(G)}(v_1)\cap D=\{m_{12}\}$ and $ N_{M(G)}(v_n)\cap D=\{m_{(n-1)n}\}$, and hence $m_{12}, m_{(n-1)n} \in D$. 
	
	Assume now there exist $m_{ij}, m_{pq} \in \mathcal{M}_{1}=\mathcal{M} \setminus \{m_{12}, m_{(n-1)n}\}$ such that $m_{ij}, m_{pq} \notin D $ for some $i,j,p,q$.
	If $m_{ij}$ and $m_{pq}$ are adjacent in $M(G)$, then, without loss of generality, we can assume that $m_{ij}=m_{i(i+1)}$ and $m_{pq}=m_{(i+1)(i+2)}$ for some $2\le i\le n-3$. This implies that $N_{M(G)}(v_{i+1})\cap D=\emptyset$, which is a contradiction. If $m_{ij}$ and $m_{pq}$ are non-adjacent in $M(G)$, then $m_{ij}$ and $m_{pq}$ are non-adjacent in $M(G)-D$, and so $M(G)-D$ is disconnected, which is a contradiction. This shows that 
	$n-2 \le| \mathcal{M} \cap D| \le n-1.$
	
	%
	
	Assume $| \mathcal{M} \cap D| = n-1$. Since $V$ is an independent set in $M(G)$ and $M(G)-D$ is connected, this implies that $|V \cap D| \ge n-1.$ this implies that
	$$|D| \ge |V \cap D|+|\mathcal{M} \cap D| \ge 2n-2\ge 2n-4.$$
	
	Now suppose that $| \mathcal{M} \cap D| = n-2$. Then there exists a vertex $m_{i(i+1)} \in \mathcal{M}$ for some $i \neq 1,n-1$ such that  $m_{i(i+1)} \notin D$. Assume that $|V \cap D| < n-2$. Then there exist at least three vertices $v_p, v_q, v_r \in V(G)$ such that $v_p, v_q, v_r \notin D$. Since $N_{M(G)}(m_{i(i+1)})\cap V(G)=\{v_i , v_{i+1}\}$, then there exists at least a vertex $v \in \{v_p, v_q, v_r\}$ such that $v \notin D$ and there is no path between $v$ and $m_{i(i+1)}$ in $M(G)-D$, which is a contradiction. This implies that $|V \cap D| \ge n-2.$ 
	
	Putting the previous two inequalities together we obtain that
	$$|D| \ge |V \cap D|+|\mathcal{M} \cap D| \ge 2n-4.$$
	On the other hand,$\gamma _{tc}(M(G)) \le 2n-4$ by Theorem \ref{T neq K_{1,n-1}}. We have $\gamma _{tc}(M(G)) = 2n-4$.
\end{proof}
\begin{Remark}
Upper bound in Theorem \ref{T neq K_{1,n-1}} is tight by Theorem \ref{tocdpn=2n-4}.
\end{Remark}

It is sufficient to add some assumptions on the diameter of a tree $T$, to compute $\gamma _{tc}(M(T))$ explicitly.
\begin{Proposition}\label{gamma _{tc}(M(T))=doubstar=2n-4}
 Let $T$ be a tree of order $n \ge 4$ with $\diam(T) = 3$. Then	
	$\gamma _{tc}(M(T))=2n-4.$
\end{Proposition}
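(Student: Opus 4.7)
The plan is to obtain $\gamma_{tc}(M(T)) = 2n-4$ by sandwiching between the lower bound from Theorem \ref{theo:TOCD<n+m-1} and the upper bound from Theorem \ref{T neq K_{1,n-1}}, both of which evaluate to $2n-4$ precisely when $|\leaf(T)| = n-2$.

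First, I would establish the structural observation that a tree of diameter $3$ is exactly a double star, i.e.\ has precisely two non-leaf vertices. To do this, take a diametral path $v_0 v_1 v_2 v_3$ in $T$ and pick any other vertex $w$. Then $w$ cannot be adjacent to $v_0$ or $v_3$, since this would produce a path of length $4$ contradicting $\diam(T)=3$; hence $w$ is adjacent to $v_1$ or $v_2$. Furthermore $w$ must be a leaf, because a second neighbor of $w$ would again extend a geodesic to length at least $4$. Consequently $V(T)\setminus\leaf(T) = \{v_1,v_2\}$ and therefore $|\leaf(T)| = n-2$.

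Now the lower bound part of Theorem \ref{theo:TOCD<n+m-1} immediately yields
\[
\gamma_{tc}(M(T)) \ge 2|\leaf(T)| = 2(n-2) = 2n-4.
\]
For the matching upper bound, I would note that $\diam(T)=3$ forces $T \ne K_{1,n-1}$ (since the star has diameter $2$), so Theorem \ref{T neq K_{1,n-1}} applies and gives $\gamma_{tc}(M(T)) \le 2n-4$. Combining the two inequalities closes the proof.

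There is no genuine obstacle here; the only content is the double-star characterization of diameter-$3$ trees, which is a short case analysis on a diametral path. Everything else is a direct citation of the two bounds already established in this section.
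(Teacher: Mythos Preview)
Your proposal is correct and follows essentially the same approach as the paper: both establish that a diameter-$3$ tree is a double star with $|\leaf(T)|=n-2$, then use the leaf lower bound $\gamma_{tc}(M(T))\ge 2|\leaf(T)|=2n-4$. The only minor difference is that for the upper bound the paper writes out the explicit dominating set $V(M(T))\setminus\{v_{n-1},v_n,m_{(n-1)n}\}$ (where $v_{n-1},v_n$ are the two centers), whereas you invoke Theorem~\ref{T neq K_{1,n-1}} directly; since the proof of that theorem produces exactly the same set, this is just a matter of citation versus repetition.
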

\begin{proof}
The assumption that $\diam(T) = 3$ implies that $T$ is a tree which is obtained by joining central vertex $v$ of $K_{1,p}$ and the central vertex $w$ of $K_{1,q}$ where $p + q = n - 2$. Let $\leaf(T)=\{v_{i}~|~ 1 \le i \le n-2\}$  be the set of leaves of $T$. Obviously $V (T) = \leaf(T) \cup \{v, w\} $ and 
$| \leaf(T)| = n - 2$. Define $v_{n-1} = v$ and $v_{n} = w$. Then $V(M(T))=\{v_{i}~|~1 \le i \le n\} \cup \{m_{i(n-1)}~|~1 \le i \le p\} \cup \{m_{i(n)}~|~p+1 \le i \le n-2\} \cup \{m_{(n-1)n}\}$. By Corollary \ref{leaf(T) le gamma _{tc}(M(T)) le 2n-2}, $\gamma _{tc}(M(T)) \ge 2|\leaf(T)|  =2n-4$.
\vskip 0.02cm
Since $V(M(T)) \setminus \{v_{n-1},v_{n},m_{(n-1)n}\}$ is a total outer-connected dominating set of $M(T)$ with $2n-4$ vertices, then $\gamma_{tc}(M(T))\le 2n-4$. 
\end{proof}
By Corollary \ref{tocdK1n=2n-2} and Proposition \ref{gamma _{tc}(M(T))=doubstar=2n-4}, we have the following result.
\begin{Corollary}\label{ gamma _{tc}(M(T))=2leaf(T)}
	Let $T$ be a tree of order $n \ge 4$ with $2 \le \diam(T) \le 3$. Then
	$\gamma _{tc}(M(T))=2|\leaf(T)|.$
\end{Corollary}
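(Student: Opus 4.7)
The plan is a short two-case analysis driven entirely by the two previously proven results cited in the statement.

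First I would observe that a tree with $\diam(T) = 2$ is forced to be a star: any vertex on a diameter-realizing path is a center adjacent to every other vertex. Thus $T = K_{1,n-1}$ and $|\leaf(T)| = n-1$. By Corollary \ref{tocdK1n=2n-2}, $\gamma_{tc}(M(T)) = 2n-2 = 2(n-1) = 2|\leaf(T)|$, giving the equality in this case.

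Next I would handle the case $\diam(T) = 3$. Here, as noted at the start of the proof of Proposition \ref{gamma _{tc}(M(T))=doubstar=2n-4}, $T$ must be a double star, i.e.\ a tree obtained by joining the centers $v$ and $w$ of two stars $K_{1,p}$ and $K_{1,q}$ with $p+q = n-2$. Hence $V(T) = \leaf(T) \cup \{v,w\}$ and $|\leaf(T)| = n-2$. Proposition \ref{gamma _{tc}(M(T))=doubstar=2n-4} then yields $\gamma_{tc}(M(T)) = 2n-4 = 2(n-2) = 2|\leaf(T)|$.

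Since the assumption $2 \le \diam(T) \le 3$ covers exactly these two cases, the conclusion follows in each. There is no real obstacle here: the only content is the structural identification of $T$ (star in one case, double star in the other) and matching $|\leaf(T)|$ against the explicit value of $\gamma_{tc}(M(T))$ supplied by the quoted results.
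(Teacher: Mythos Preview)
Your proposal is correct and matches the paper's approach exactly: the paper simply cites Corollary~\ref{tocdK1n=2n-2} and Proposition~\ref{gamma _{tc}(M(T))=doubstar=2n-4} for the two diameter cases, which is precisely the case split (star for $\diam(T)=2$, double star for $\diam(T)=3$) you carry out.
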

\begin{Proposition}\label{gamma _{tc}(M(T))=2n-6}
Let $T$ be a tree of order $n \ge 6$ with $ \diam(T)=4$ and $P_{5}:v_{1}v_{2}v_{3}v_{4}v_{5}$ be the longest path in $T$ such that $\deg_{T}(v_{3})=n-3$. Then $\gamma _{tc}(M(T))=2n-6$.
\end{Proposition}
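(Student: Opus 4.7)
The plan is to extract the structure of $T$ from the diameter and degree hypotheses, then combine the leaf-based lower bound of Corollary \ref{leaf(T) le gamma _{tc}(M(T)) le 2n-2} with an explicit total outer-connected dominating set of size $2n-6$.

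First, I would pin down the shape of $T$. Since $\deg_T(v_3)=n-3$, besides $v_2$ and $v_4$ the vertex $v_3$ has $n-5$ further neighbours; call them $u_1,\dots,u_{n-5}$. These together with $v_1,\dots,v_5$ already account for $n$ vertices, so $V(T)=\{v_1,\dots,v_5,u_1,\dots,u_{n-5}\}$. No $u_i$ can have a neighbour other than $v_3$ in $T$ (any such edge would close a cycle through $v_3$), so every $u_i$ is a leaf; the same acyclicity argument forces $\deg_T(v_2)=\deg_T(v_4)=2$. Therefore $\leaf(T)=\{v_1,v_5,u_1,\dots,u_{n-5}\}$ has cardinality $n-3$, and Corollary \ref{leaf(T) le gamma _{tc}(M(T)) le 2n-2} gives
\[
\gamma_{tc}(M(T))\ge 2|\leaf(T)|=2n-6.
\]

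For the reverse inequality, write $m_i$ for the edge-vertex of $M(T)$ associated with the edge $v_3u_i$ and put
\[
D=V(M(T))\setminus\{v_2,v_3,v_4,m_{23},m_{34}\}.
\]
Since $|V(M(T))|=n+(n-1)=2n-1$, we have $|D|=2n-6$. I would then verify the two defining conditions. For total domination, the leaves $v_1,v_5,u_1,\dots,u_{n-5}$ and the edge-vertices $m_{12},m_{45},m_1,\dots,m_{n-5}$ are all in $D$, and a short case-check shows every vertex of $M(T)$ has a neighbour in $D$; the non-trivial cases are $v_3$, $m_{23}$ and $m_{34}$, all of which are dominated by any $m_i$ (here we use $n\ge 6$ to guarantee that such an $m_i$ exists). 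For outer-connectedness, the induced subgraph on $\{v_2,v_3,v_4,m_{23},m_{34}\}$ contains the edges $v_2m_{23}$, $v_3m_{23}$, $v_3m_{34}$, $v_4m_{34}$ and $m_{23}m_{34}$ (the last because the edges $v_2v_3$ and $v_3v_4$ of $T$ share $v_3$), hence is connected.

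The main conceptual step is the structural reduction in the first paragraph, where the degree and diameter hypotheses have to be squeezed to show that $T$ is exactly the ``broom with two length-two handles'' described above; once that is in place, everything reduces to routine adjacency bookkeeping in $M(T)$. The subtlety worth flagging is the reliance on $n\ge 6$: this is precisely what guarantees at least one pendant $u_i$ at $v_3$, so that $v_3$ itself can be left out of $D$ without breaking total domination.
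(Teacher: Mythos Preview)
Your proof is correct and follows essentially the same approach as the paper: both establish the lower bound $2n-6$ via Corollary~\ref{leaf(T) le gamma _{tc}(M(T)) le 2n-2} after identifying $|\leaf(T)|=n-3$, and both exhibit the same total outer-connected dominating set (your complement $V(M(T))\setminus\{v_2,v_3,v_4,m_{23},m_{34}\}$ is exactly the paper's set $\{v_1,v_5,m_{12},m_{45}\}\cup\{v_{5+i},m_{3(5+i)}\mid 1\le i\le n-5\}$). Your write-up is in fact more careful than the paper's, as you justify the structural reduction and the role of the hypothesis $n\ge 6$ explicitly.
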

\begin{proof}
Let $T$ be a tree of order $n \ge 6$ with $ \diam(T)=4$ and vertex set $V(T)=\{v_{1}, \dots,v_{n}\}$. Assuming that $P_{5}:v_{1}v_{2}v_{3}v_{4}v_{5}$ is the longest path in $T$ such that $\deg_{T}(v_{3})=n-3$, implies that $V(M(T))=V(T) \cup \{m_{i(i+1)}~|~1 \le i \le 4\} \cup \{m_{3(5+i)}~|~1 \le i \le n-5\} $. Obviously, $\leaf(T)=\{v_{1}, v_{5}\} \cup \{v_{5+i}~|~1 \le i \le n-5\}$ and $|\leaf(T)|=n-3$.  By Corollary \ref{leaf(T) le gamma _{tc}(M(T)) le 2n-2}, $\gamma _{tc}(M(T)) \ge 2|\leaf(T)|  =2n-6$.
\vskip 0.02cm
Now since $\{v_{1}, v_{5}, m_{12}, m_{45}\} \cup \{v_{5+i},m_{3(5+i)}~|~1 \le i \le n-5\}$ is a total outer-connected dominating set of $M(T)$ with $2n-6$ vertices, this implies that $\gamma_{tc}(M(T))\le 2n-6$.
\end{proof}
\begin{Remark}\label{123456}
By Proposition \ref{gamma _{tc}(M(T))=2n-6}, there exists a tree $T$ of order $n=6$ with $\gamma_{tc}(M(T))=n$.
\end{Remark}
Remark \ref{123456} suggests the following natural problem.
\begin{Problem}
Characterize the trees $T$ of order $n$ with $\gamma_{tc}(M(T))=n$.
\end{Problem}
\section{Operation on graphs}
	Consider a simple graph $G=(V(G), E(G))$, the \emph{corona} $G\circ K_1$ of $G$ is the graph of order $2|V(G)|$ obtained from $G$ by adding a pendant edge to each vertex of $G$. The \emph{$2$-corona} $G\circ P_2$ of $G$ is the graph of order $3|V(G)|$ obtained from $G$ by attaching a path of length $2$ to each vertex of $G$ so that the resulting paths are vertex-disjoint.
\begin{Theorem}\label{tocdcorona1}
	For any connected graph $G$ of order $n\geq 2$, 
	$$\gamma _{tc}(M(G\circ K_1))=2n.$$
\end{Theorem}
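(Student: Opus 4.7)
The plan is to sandwich $\gamma_{tc}(M(G \circ K_1))$ between $2n$ from below and $2n$ from above. For the lower bound, I would invoke Theorem \ref{theo:TOCD<n+m-1}. Observe that each pendant vertex added by the corona operation is a leaf of $G \circ K_1$, so $|\leaf(G \circ K_1)| = n$, and the theorem immediately gives
$$\gamma_{tc}(M(G \circ K_1)) \ge 2|\leaf(G \circ K_1)| = 2n.$$

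For the upper bound, I would exhibit an explicit total outer-connected dominating set of cardinality $2n$. Writing $V(G) = \{v_1, \ldots, v_n\}$, denote by $u_i$ the pendant attached to $v_i$ in $G \circ K_1$ and by $e_i \in \mathcal{M}$ the edge-vertex of $M(G \circ K_1)$ corresponding to the pendant edge $v_i u_i$. Take
$$D = \{u_i \mid 1 \le i \le n\} \cup \{e_i \mid 1 \le i \le n\}.$$
Verifying that $D$ is a total dominating set of $M(G \circ K_1)$ reduces to four routine neighbourhood checks: $u_i$ is adjacent only to $e_i$, which lies in $D$; each $e_i$ is adjacent to $u_i \in D$; each $v_i$ is adjacent to $e_i \in D$; and each edge-vertex $m_{ij}$ with $v_iv_j \in E(G)$ is adjacent to $e_i \in D$ in $M(G \circ K_1)$, since the edges $v_iv_j$ and $v_iu_i$ share the endpoint $v_i$.

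The key structural observation that makes outer-connectedness transparent is
$$V(M(G \circ K_1)) \setminus D = V(G) \cup \mathcal{M},$$
and the subgraph induced on this set inside $M(G \circ K_1)$ is exactly $M(G)$: no edge of $M(G \circ K_1)$ between two such vertices involves a pendant $u_i$ or a pendant-edge vertex $e_i$, and all adjacencies within $V(G) \cup \mathcal{M}$ are precisely those dictated by Definition of the middle graph applied to $G$. Since $G$ is connected with $n \ge 2$, the graph $M(G)$ is connected, so $M(G \circ K_1) - D$ is connected, and $D$ is a total outer-connected dominating set of size $2n$. Combining the two bounds yields the claimed equality. I do not foresee a real obstacle; the only point that requires care is the identification of the complement of $D$ with $V(M(G))$, after which connectedness of $M(G\circ K_1) - D$ is immediate from connectedness of $M(G)$.
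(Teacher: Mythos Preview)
Your proposal is correct and follows essentially the same approach as the paper: the lower bound via Theorem~\ref{theo:TOCD<n+m-1} with $|\leaf(G\circ K_1)|=n$, and the upper bound via the explicit set consisting of the $n$ pendant vertices together with the $n$ pendant-edge vertices. Your identification of $M(G\circ K_1)-D$ with $M(G)$ is a nice way to make the outer-connectedness verification explicit, whereas the paper simply asserts that the set works.
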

\begin{proof} 
	To fix the notation, assume $V(G)=\{v_1,\dots, v_n\}$. Then $V(G\circ K_1)= \{v_{1},\dots, v_{2n}\}$ and $E(G\circ K_1)=\{v_1v_{n+1},\dots, v_nv_{2n} \}\cup E(G) $. Then $V(M(G\circ K_1))=V(G\circ K_1)\cup \mathcal{M}$, where $\mathcal{M}=\{ m_{i(n+i)}~|~1\leq i \leq n \}\cup \{ m_{ij}~|~v_iv_j\in  E(G)\}$.
	
	By Theorem \ref{theo:TOCD<n+m-1}, $\gamma _{tc}(M(G\circ K_1)) \ge 2|\leaf(G\circ K_1)|=2n$.
	Now since $ D=\{ m_{i(n+i)}, v_{n+i}~|~1 \leq i\leq n\} $ is a total-outer connected dominating set of $M(G\circ K_1)$ with $|D| = 2n$, we have 
	$$\gamma _{tc}(M(G\circ K_1))=2n.$$
\end{proof}
As an immediate consequence of Theorem \ref{tocdcorona1}, we have the following result.
\begin{Corollary}\label{tocdGofordern=n}
	For any $n \ge 4$, there exists a connected graph $G$ of order $n$ such that
	$$\gamma _{tc}(M(G))=n.$$
\end{Corollary}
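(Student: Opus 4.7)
The plan is to take the corollary at face value as an ``immediate consequence'' of Theorem~\ref{tocdcorona1}: given $n \ge 4$, I would exhibit a single connected graph $G$ of order $n$ and read off $\gamma_{tc}(M(G)) = n$ directly from Theorem~\ref{tocdcorona1}. The canonical candidate, for even $n = 2k$ with $k \ge 2$, is the corona $G = H \circ K_1$, where $H$ is any connected graph on $k$ vertices (concretely one can choose $H = P_k$ or $H = K_k$, whichever is most convenient). Then $G$ is connected, of order $2k = n$, and Theorem~\ref{tocdcorona1} applied to $H$ gives
\[
\gamma_{tc}(M(G)) = 2|V(H)| = 2k = n,
\]
so the even case collapses to one line.

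For odd $n$ the corona itself is unavailable because $H \circ K_1$ always has even order, so a small auxiliary construction is needed. A natural candidate is to start from a corona $H \circ K_1$ of order $n-1$ (taking $|V(H)| = (n-1)/2$) and attach a single extra pendant edge at one of the existing leaves, producing a connected graph of order $n$ whose leaf set has grown by exactly one. The expectation is that the same ``each leaf of $G$ forces two vertices into any total outer-connected dominating set of $M(G)$'' mechanism that drives Theorem~\ref{theo:TOCD<n+m-1} and Theorem~\ref{tocdcorona1} will again pin $\gamma_{tc}(M(G))$ to the order of $G$; however, this is no longer literally a corollary of Theorem~\ref{tocdcorona1} and would require re-running the lower-bound half of that argument on the slightly enlarged graph.

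The main obstacle, then, is really just whether the corollary is meant for every $n \ge 4$ or only the even ones. If the intended reading is the even one, the proof is the one-line corona argument above and nothing more is required. If the intended reading is literally every $n \ge 4$, I would present the corona construction as the headline proof for even $n$ and then dispatch odd $n$ by the ``corona plus one leaf'' variant; the only nontrivial step there is verifying that the connectedness requirement on $M(G) - D$ is not broken by the extra pendant, which is a direct check mirroring the end of the proof of Theorem~\ref{tocdcorona1}. No further machinery beyond Theorem~\ref{tocdcorona1} and the standard leaf argument from Theorem~\ref{theo:TOCD<n+m-1} is needed.
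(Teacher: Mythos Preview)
Your even-order argument is exactly what the paper has in mind: for $n=2k$ with $k\ge 2$, take any connected $H$ on $k$ vertices, set $G=H\circ K_1$, and read off $\gamma_{tc}(M(G))=2k=n$ from Theorem~\ref{tocdcorona1}. The paper gives no further proof, so on the even side you and the paper coincide. You are also right to flag that Theorem~\ref{tocdcorona1} by itself says nothing about odd $n$; the paper does not fill this in either.

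However, your proposed fix for odd $n$ does not work. First, attaching a pendant edge at an existing leaf $u$ does \emph{not} enlarge the leaf set: $u$ acquires degree~$2$ and ceases to be a leaf, while the new vertex becomes a leaf, so $|\leaf(G)|$ is unchanged at $(n-1)/2$ and the leaf lower bound from Theorem~\ref{theo:TOCD<n+m-1} is only $n-1$. Second, and decisively, the construction actually fails already at $n=5$: the only connected $H$ on $(n-1)/2=2$ vertices is $K_2$, so $H\circ K_1\cong P_4$, and attaching a pendant at an endpoint yields $P_5$; but Theorem~\ref{tocdpn=2n-4} gives $\gamma_{tc}(M(P_5))=2\cdot 5-4=6\neq 5$. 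So the ``corona plus one leaf'' graph is not a witness. A clean way to cover odd $n=2k+1\ge 5$ is to use the friendship graph $F_k$ on $2k+1$ vertices: by Theorem~\ref{toutfriendship}, $\gamma_{tc}(M(F_k))=2k+1=n$.
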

\begin{Theorem}\label{tocd2corona}
	For any connected graph $G$ of order $n\geq 2$, $$2n+\gamma_{t}(M(G)) \le \gamma_{tc}(M(G\circ P_2)) \le 4n.$$
\end{Theorem}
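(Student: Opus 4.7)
The plan is to prove the upper and lower bounds separately. Using the notation $V(G\circ P_2)=V(G)\cup\{u_i,w_i~|~1\le i\le n\}$, where $v_iu_iw_i$ is the path of length two attached to $v_i$, I would establish the upper bound $\gamma_{tc}(M(G\circ P_2))\le 4n$ by exhibiting the explicit set
\[
D_0=\{u_i,w_i,m_{v_iu_i},m_{u_iw_i}~|~1\le i\le n\}
\]
of size $4n$. A direct check shows every vertex of $M(G\circ P_2)$ has a neighbor in $D_0$ (each $v_i$ and each $m_{ij}$ with $v_iv_j\in E(G)$ via the adjacent $m_{v_iu_i}$, and the remaining vertices locally), and $M(G\circ P_2)-D_0=V(G)\cup\{m_{ij}~|~v_iv_j\in E(G)\}$ has induced subgraph exactly $M(G)$, which is connected because $G$ is.

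For the lower bound let $D$ be a minimum total outer-connected dominating set. Two initial forcings will be the starting point. First, $w_i,m_{u_iw_i}\in D$ for every $i$: the latter is the unique neighbor of the leaf $w_i$ and so is forced by total domination, and if $w_i\notin D$ then $w_i$ is isolated in $M(G\circ P_2)-D$, forcing $|D|\ge|V(M(G\circ P_2))|-1>4n$ and contradicting the upper bound. Second, $m_{v_iu_i}\in D$ implies $u_i\in D$: the only neighbors of $u_i$ in $M(G\circ P_2)$ are $m_{v_iu_i}$ and $m_{u_iw_i}$, so if both lie in $D$ while $u_i$ does not, then $u_i$ is again isolated in the complement, contradicting outer-connectedness just as before.

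Now set $S=D\setminus\{w_i,m_{u_iw_i}~|~1\le i\le n\}$, so that $|D|=2n+|S|$, and decompose $S=A\cup B\cup C\cup X$ with $A=S\cap V(G)$, $B=S\cap\{m_{ij}~|~v_iv_j\in E(G)\}$, $C=S\cap\{u_i~|~1\le i\le n\}$, $X=S\cap\{m_{v_iu_i}~|~1\le i\le n\}$. For each $v_i$ fix a neighbor $v_{j_i}$ in $G$ (which exists since $G$ is connected and $n\ge 2$) and define
\[
\tilde D=A\cup B\cup\{v_i~|~u_i\in C\}\cup\{m_{i,j_i}~|~m_{v_iu_i}\in X\}\subseteq V(M(G)),
\]
so that $|\tilde D|\le|S|$. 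It suffices to verify that $\tilde D$ is a total dominating set of $M(G)$, since this yields $|D|=2n+|S|\ge 2n+|\tilde D|\ge 2n+\gamma_t(M(G))$.

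The main obstacle is the verification at edge-vertices $m_{ij}$ that are dominated in $M(G\circ P_2)$ exclusively by some $m_{v_iu_i}\in X$: the naive replacement $m_{v_iu_i}\mapsto m_{i,j_i}$ sends $m_{v_iu_i}$ to an element that may equal $m_{ij}$ itself (for instance when $v_i$ is a leaf of $G$), and then $m_{ij}$ would gain no neighbor in $\tilde D$. The second forcing resolves exactly this: $m_{v_iu_i}\in X$ forces $u_i\in C$, hence $v_i\in\tilde D$, and $v_i$ is a neighbor of $m_{ij}$ in $M(G)$. All remaining subcases (dominating $v_i$ through an element of $B$, or $m_{ij}$ through an element of $A$ or $B$, or the symmetric case at $v_j$) are routine, completing the proof.
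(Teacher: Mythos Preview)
Your argument is correct and follows the same skeleton as the paper: the upper bound is obtained from exactly the same set (the paper writes it as $\{v_{n+i},v_{2n+i},m_{i(n+i)},m_{(n+i)(2n+i)}\}$), and the lower bound begins identically by forcing $w_i,m_{u_iw_i}\in D$ for every $i$ and then arguing that the remaining part of $D$ must have size at least $\gamma_t(M(G))$.

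The difference lies entirely in how that last inequality is justified. The paper simply observes that the forced set $D_1=\{w_i,m_{u_iw_i}\}$ has no neighbours in $V(M(G))$ and from this asserts $|D|\ge 2n+\gamma_t(M(G))$; it does not explain why a set sitting partly outside $V(M(G))$ (namely in $\{u_i,m_{v_iu_i}\}$) that totally dominates $V(M(G))$ inside $M(G\circ P_2)$ must have size at least $\gamma_t(M(G))$. You fill this gap with two extra ingredients the paper does not mention: the second forcing $m_{v_iu_i}\in D\Rightarrow u_i\in D$ (again via outer-connectedness), and the explicit projection $u_i\mapsto v_i$, $m_{v_iu_i}\mapsto m_{i,j_i}$ producing a genuine total dominating set $\tilde D\subseteq V(M(G))$ with $|\tilde D|\le |S|$. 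Your handling of the delicate case where some $m_{ij}$ is dominated only by $m_{v_iu_i}$ (and $v_i$ is a leaf of $G$) is exactly where the naive replacement would fail and where the second forcing is needed. So the strategy coincides with the paper's, but your lower-bound argument is strictly more complete.
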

\begin{proof} 
	To fix the notation, assume $V(G)=\{v_1,\dots, v_n\}$. Then $V(G\circ P_2)= \{v_{1},\dots, v_{3n}\}$ and $E(G\circ P_2)=\{v_iv_{n+i}, v_{n+i}v_{2n+i}~|~1\leq i \leq n \}\cup E(G) $. Then $V(M(G\circ P_2))=V(G\circ P_2)\cup \mathcal{M}$, where $\mathcal{M}=\{ m_{i(n+i)},m_{(n+i)(2n+i)}~|~1\leq i \leq n \}\cup \{ m_{ij}~|~v_iv_j\in  E(G)\}$. 
	
	Let $D$ be a total-outer connected dominating set of $M(G\circ P_2)$. Since $N_{M(G\circ P_2)}(v_{2n+i})\cap D = \{m_{(n+i)(2n+i)}\}$ for every $1\leq i \leq n$, we have that $\{m_{(n+i)(2n+i)}~|~1\leq i \leq n\} \subseteq D$. This implies that $v_{2n+i} \in D$ for every $1\leq i \leq n$ because $M(G\circ P_2)-D$ is connected. Hence $\{m_{(n+i)(2n+i)}, v_{2n+i}~|~1\leq i \leq n\}=D_{1} \subseteq D$.   $N_{M(G\circ P_2)}(v)\cap D_{1} = \emptyset$ for every $v \in V(M(G))=\{v_i~|~1\leq i \leq n\} \cup \{m_{ij}~|~v_iv_j\in  E(G)\}$ implies that $|D| \ge 2n+\gamma_{t}(M(G))$, and hence we obtain the first inequality. 
	\vskip 0.04cm
	Since $D=\{v_{n+i}, v_{2n+i}, m_{i(n+i)}, m_{(n+i)(2n+i)} ~|~1\leq i \leq n\}$ is a total-outer connected dominating set of $M(G\circ P_2)$ with $|D|=4n$, we have $\gamma_{tc}(M(G\circ P_2)) \le 4n$, and we obtain the second inequality.
\end{proof}
\begin{Remark}
	The upper bound in Theorem \ref{tocd2corona} is tight when $G=P_2$. This is because $G\circ P_2$ is isomorphic to $P_6$, and hence $\gamma_{tc}(M(G\circ P_2))=\gamma_{tc}(M( P_6))=4n=2n-4=8$ by Theorem \ref{tocdpn=2n-4}.
\end{Remark}
\begin{Proposition}\label{tocdKnoP2corona}
	For any complete graph $K_n$ of order $n\geq 3$, $$ \gamma_{tc}(M(K_n\circ P_2)) = 2n+\lceil 2n/3\rceil.$$
\end{Proposition}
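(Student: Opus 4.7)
The plan is to prove matching bounds $\gamma_{tc}(M(K_n \circ P_2)) \ge 2n + \lceil 2n/3 \rceil$ and $\gamma_{tc}(M(K_n \circ P_2)) \le 2n + \lceil 2n/3 \rceil$.

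First I would handle the lower bound by invoking Theorem \ref{tocd2corona}, which gives $\gamma_{tc}(M(K_n \circ P_2)) \ge 2n + \gamma_t(M(K_n))$. Since $K_n$ trivially contains $P_n$ as a subgraph for $n\ge 3$, Theorem \ref{corol:mintotdominfamily} yields $\gamma_t(M(K_n)) = \lceil 2n/3 \rceil$, completing the lower bound.

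For the upper bound, I would exhibit an explicit total outer-connected dominating set of exactly the claimed size. The proof of Theorem \ref{tocd2corona} identifies $D_1 = \{m_{(n+i)(2n+i)},\, v_{2n+i} \mid 1 \le i \le n\}$ of size $2n$ as the subset forced to lie in every total outer-connected dominating set. I would augment $D_1$ with the explicit size-$\lceil 2n/3\rceil$ total dominating set $S$ of $M(K_n)$ built in the proof of Theorem \ref{tocdcompletegraph}, namely
\[
S=\{m_{(3i+1)(3i+2)}, m_{(3i+2)(3i+3)} \mid 0\le i\le \lfloor n/3\rfloor-1\},
\]
plus the one or two tail elements $m_{(n-1)n}$ or $m_{(n-2)(n-1)},m_{(n-1)n}$ dictated by $n \bmod 3$. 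Setting $D=D_1\cup S$ gives $|D|=2n+\lceil 2n/3\rceil$.

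It then remains to verify that $D$ really is a total outer-connected dominating set. For total domination, the pendant-path vertices $v_{n+i},\, v_{2n+i},\, m_{i(n+i)}$ are each dominated by a neighbor lying in $D_1$, while every vertex of $V(M(K_n))\subset V(M(K_n\circ P_2))$ has a neighbor in $S$ because $S$ is a total dominating set of $M(K_n)$. For the outer-connectedness, I would observe that the complement of $D$ in $M(K_n\circ P_2)$ consists of the three-vertex pendant paths $v_{n+i} - m_{i(n+i)} - v_i$, one for each $i$, glued to the induced subgraph $M(K_n) - S$ along $v_i$ (and along $m_{i(n+i)}$, which is adjacent in the line graph to every $m_{ij}$). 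Each leg attaches because the block pattern of $S$ leaves, for every $v_i$, at least one middle vertex $m_{ij}\notin S$; and $M(K_n) - S$ is itself connected.

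The main obstacle is this last connectivity step, which must be verified uniformly in the three residues $n\bmod 3$ since the tail of $S$ is placed differently in each case. The core of that verification is the combinatorial statement that no $v_i$ has all of its incident middle vertices in $S$; once that is in hand, each pendant leg splices onto the connected subgraph $M(K_n) - S$, giving a single connected complement and finishing the upper bound.
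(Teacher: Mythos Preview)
Your proposal is correct and follows essentially the same route as the paper: the lower bound comes from Theorem~\ref{tocd2corona} combined with Theorem~\ref{corol:mintotdominfamily}, and the upper bound is witnessed by $D=\{v_{2n+i},m_{(n+i)(2n+i)}\mid 1\le i\le n\}\cup S$ with the very same block-structured $S$ used in Theorem~\ref{tocdcompletegraph}. If anything, your write-up is slightly more careful than the paper's, which simply asserts that $D$ is total outer-connected dominating without spelling out the connectivity of the complement; your observation that $M(K_n)-S$ is already known to be connected from Theorem~\ref{tocdcompletegraph}, and that each pendant leg attaches because no $v_i$ loses all its incident middle vertices to $S$, is exactly what is needed to close that gap.
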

\begin{proof} 
	To fix the notation, assume $V(K_n)=\{v_1,\dots, v_n\}$. Then $V(K_n\circ P_2)= \{v_{1},\dots, v_{3n}\}$ and $E(K_n\circ P_2)=\{v_iv_{n+i}, v_{n+i}v_{2n+i}~|~1\leq i \leq n \}\cup E(K_n) $. Then $V(M(K_n\circ P_2))=V(K_n\circ P_2)\cup \mathcal{M}$, where $\mathcal{M}=\{ m_{i(n+i)},m_{(n+i)(2n+i)}~|~1\leq i \leq n \}\cup \{ m_{ij}~|~v_iv_j\in  E(K_n)\}$. 
	
	By Theorem \ref{corol:mintotdominfamily}, $ \gamma_{t}(M(K_n)) = \lceil 2n/3\rceil$.
	Moreover, by Theorem \ref{tocd2corona}, $  \gamma_{tc}(M(K_n\circ P_2)) \ge 2n+\gamma_{t}(M(K_n))$, and hence $\gamma_{tc}(M(K_n\circ P_2)) \ge 2n+\lceil 2n/3\rceil$.
	
	 If $n \equiv 0 \mod 3$, then consider 
	$$S=\{m_{12},m_{23},m_{45},m_{56},\dots, m_{(n-2)(n-1)}, m_{(n-1)n}\}.$$ 
	Similarly, if $n \equiv 1 \mod 3$, then consider 
	$$S=\{m_{12},m_{23},m_{45},m_{56},\dots, m_{(n-3)(n-2)},m_{(n-2)(n-1)}\}\cup\{m_{(n-1)n}\}.$$ 
	Finally, if $n \equiv 2 \mod 3$, then consider 
	$$S=\{m_{12},m_{23},m_{45},m_{56},\dots, m_{(n-4)(n-3)},m_{(n-3)(n-2)}\}\cup\{m_{(n-2)(n-1)},m_{(n-1)n}\}.$$ 
	Notice that in all three cases, $S$ is a total dominating set of $M(K_n)$ with $|S|=\lceil\frac{2n}{3}\rceil$. 
	Now since $D=\{ v_{2n+i}, m_{(n+i)(2n+i)} ~|~1\leq i \leq n\} \cup S$ is a total-outer connected dominating set of $M(K_n\circ P_2)$ with $|D|=2n+\gamma_{t}(M(K_n))$, we have $$ \gamma_{tc}(M(K_n\circ P_2)) = 2n+\lceil 2n/3\rceil.$$
\end{proof}
\begin{Remark}
	By Proposition \ref{tocdKnoP2corona}, the lower bound in Theorem \ref{tocd2corona} is tight.
\end{Remark}
As a consequence of Theorem \ref{tocd2corona} it is natural to study the following
\begin{Problem}
	Classify the graphs G of order $n \ge 3$ such that
	$$ \gamma_{tc}(M(G\circ P_2)) = 4n.$$
\end{Problem}
	A spider graph $S_{1,n,n}$ is obtained from the star graph $K_{1,n}$ by replacing every edge with a path of length $2$. 
Notice that $S_{1,1,1}$ is isomorphic to $K_{1,2}$ and $S_{1,2,2}$ to $P_5$. This implies that $\gamma _{tc}(M(S_{1,1,1}))=4$ by Corollary \ref{tocdK1n=2n-2}, and  $\gamma _{tc}(M(S_{1,2,2}))=6$ by Theorem \ref{tocdpn=2n-4}.

\begin{Theorem} \label{tocdS1,n,n=2n+2}
	For any spider graph $S_{1,n,n}$ on $2n+1$ vertices, with $n\geq 3$, we have
	$$\gamma _{tc}(M(S_{1,n,n}))=2n+2.$$
\end{Theorem}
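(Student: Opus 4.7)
Fix notation: let $c$ denote the centre of the underlying $K_{1,n}$, let $u_1,\dots,u_n$ be the subdivision vertices, and let $w_1,\dots,w_n$ be the leaves, so that $E(S_{1,n,n})=\{cu_i,\,u_iw_i : 1\le i\le n\}$. Then $V(M(S_{1,n,n}))$ consists of the $2n+1$ original vertices together with $\mathcal{M}=\{m_{cu_i},\,m_{u_iw_i}:1\le i\le n\}$, a total of $4n+1$ vertices. Let $D$ be any total outer-connected dominating set of $M(S_{1,n,n})$.

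For the lower bound I will first establish forced memberships. Because $\deg_{M(S_{1,n,n})}(w_i)=1$ with unique neighbour $m_{u_iw_i}$, total domination forces $m_{u_iw_i}\in D$ for every $i$; outer connectivity then forces $w_i\in D$ as well, since otherwise $w_i$ would be an isolated vertex of $M(S_{1,n,n})-D$ (the degenerate possibility that $V(M(S_{1,n,n}))\setminus D$ is a single vertex would already give $|D|=4n\ge 2n+2$). This yields $2n$ forced elements of $D$. Next, to totally dominate $c$ at least one vertex of the form $m_{cu_k}$ must lie in $D$. The argument then splits: if at least two of the $m_{cu_i}$ belong to $D$, we immediately get $|D|\ge 2n+2$; if exactly one, say $m_{cu_k}$, belongs to $D$, then both neighbours of $u_k$ in $M(S_{1,n,n})$, namely $m_{cu_k}$ and $m_{u_kw_k}$, lie in $D$, so $u_k$ would be isolated in $M(S_{1,n,n})-D$ unless $u_k\in D$. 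Either way $|D|\ge 2n+2$.

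For the upper bound I will exhibit the explicit candidate
$$D^{\ast}=\{w_1,\dots,w_n,\,m_{u_1w_1},\dots,m_{u_nw_n},\,u_1,\,m_{cu_1}\},$$
which has cardinality $2n+2$, and verify the two defining properties. Total domination is routine vertex by vertex: each $u_i$ with $i\ne 1$ is dominated by $m_{u_iw_i}$, each $m_{cu_i}$ with $i\ne 1$ is dominated by $m_{cu_1}$ through the line-graph clique at $c$, and the remaining cases are immediate. Outer connectivity follows because the complement $V(M(S_{1,n,n}))\setminus D^{\ast}=\{c,u_2,\dots,u_n,m_{cu_2},\dots,m_{cu_n}\}$ induces a connected subgraph of $M(S_{1,n,n})$: the vertices $m_{cu_i}$ with $i\ge 2$ form a clique (edges inherited from $L(S_{1,n,n})$), $c$ is adjacent to each of them, and each $u_j$ with $j\ge 2$ is attached via $m_{cu_j}$.

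The main obstacle I anticipate is the last step of the lower-bound argument, namely ruling out $|D|=2n+1$. The delicate point is that the $n+n+1$ "obvious" forced vertices do not yet contradict anything; one has to exploit outer connectivity precisely at the subdivision vertex $u_k$ paired with the unique chosen $m_{cu_k}$, using the fact that both its middle-graph neighbours are simultaneously absorbed into $D$. Once this isolation observation is in place, the upper-bound construction and its verification are straightforward.
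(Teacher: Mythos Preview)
Your proof is correct and follows essentially the same approach as the paper's own proof: the forced inclusions of the leaf-incident middle vertices and the leaves themselves, followed by the observation that the subdivision vertex $u_k$ corresponding to the chosen $m_{cu_k}$ must lie in $D$ (lest it be isolated in the complement), and the identical upper-bound construction. Your case split on the number of $m_{cu_i}$ in $D$ and your remark about the degenerate single-vertex complement are minor elaborations, but the argument is the same.
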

\begin{proof}
	To fix the notation, assume that $V(S_{1,n,n})=\{v_0,v_1,\dots,v_{2n}\}$ and $E(S_{1,n,n})=\{v_0v_i,v_iv_{n+i}~|~1\le i\le n\}$. 
	Then $V(M(S_{1,n,n}))=V(S_{1,n,n})\cup M$, where $M=\{m_{0i}, m_{i(n+i)}~|~1 \leq i\leq n\}$. 
	
	Let $D$ be a total-outer connected dominating set of $M(S_{1,n,n})$. Since $N_{M(S_{1,n,n})}(v_{n+i})\cap D=\{m_{i(n+i)}\}$ for every $1 \le i \le n$, then $\{ m_{i(n+i)}~|~1 \leq i\leq n\} \subseteq D$. This implies that $v_{n+i} \in D$ for every $1 \le i \le n$, because $M(S_{1,n,n})-D$ is connected. Hence $\{ m_{i(n+i)}, v_{n+i}~|~1 \leq i\leq n\} \subseteq D$.
	Obviously $N_{M(S_{1,n,n})}(v_{0})\cap D \subseteq \{m_{0i}~|~1 \leq i\leq n\}$, so there exists at least a vertex $m_{0i} \in D$ for some $1 \leq i\leq n$. Moreover, since $m_{0i},m_{i(n+i)} \in D$ for some $i$, then we have that $v_i \in D$ because $M(S_{1,n,n})-D$ is connected. As a consequence, $|D| \ge 2n+2$.
	
	Now since $ D=\{ m_{i(n+i)}, v_{n+i}~|~1 \leq i\leq n\} \cup \{m_{01},v_{1}\} $ is a total-outer connected dominating set of $M(S_{1,n,n})$ with $|D| = 2n+2$ we have 
	$\gamma _{tc}(M(S_{1,n,n}))=2n+2.$
\end{proof}
We will now study the friendship graph $F_n$ of order $2n+1$ that is obtained by joining $n$ copies of the cycle graph $C_3$ with a common vertex.
\begin{Theorem}\label{toutfriendship}
	Let $F_n$ be the friendship graph with $n\ge2$. Then $$\gamma _{tc}(M(F_n))= 2n+1.$$
\end{Theorem}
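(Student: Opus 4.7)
The plan is to prove the theorem by matching bounds, with the upper bound coming from an explicit construction and the lower bound from a per-petal accounting argument.

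Fix notation: take $V(F_n)=\{v_0,v_1,\ldots,v_{2n}\}$ with $v_0$ the center and the $n$ triangles being $v_0 v_{2i-1} v_{2i}$ for $1\le i\le n$, so that $V(M(F_n))=V(F_n)\cup\mathcal{M}$ with $\mathcal{M}=\{m_{0j}:1\le j\le 2n\}\cup\{m_{(2i-1)(2i)}:1\le i\le n\}$. Define the $i$-th petal $P_i=\{v_{2i-1},v_{2i},m_{0(2i-1)},m_{0(2i)},m_{(2i-1)(2i)}\}$; these are pairwise disjoint and partition $V(M(F_n))\setminus\{v_0\}$. The key local fact is that in $M(F_n)$ the vertex $v_{2i-1}$ has exactly the two $M$-neighbors $m_{0(2i-1)}$ and $m_{(2i-1)(2i)}$, and similarly $v_{2i}$ has only $m_{0(2i)}$ and $m_{(2i-1)(2i)}$.

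For the upper bound, I would exhibit
$$D=\{v_{2i-1},m_{(2i-1)(2i)}:1\le i\le n\}\cup\{m_{01}\}$$
of size $2n+1$ and verify both conditions. Total domination is a short local check: $m_{(2i-1)(2i)}$ covers $v_{2i}$ and $m_{0(2i)}$, while $m_{01}$ covers $v_0$ and (via the line-graph clique on spokes) every other $m_{0(2i-1)}$, and self-domination of $v_{2i-1}$ and $m_{(2i-1)(2i)}$ is mutual. The induced subgraph $M(F_n)-D$ consists of $v_0$ together with the remaining spoke-edge vertices and the $v_{2i}$, which all lie in a connected subgraph through the clique of spokes around $v_0$, so outer-connectedness holds.

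For the lower bound, let $D$ be any total outer-connected dominating set and first show $|D\cap P_i|\ge 2$ for each $i$. Total domination at $v_{2i-1}$ and $v_{2i}$ forces $\{m_{0(2i-1)},m_{(2i-1)(2i)}\}\cap D\ne\emptyset$ and $\{m_{0(2i)},m_{(2i-1)(2i)}\}\cap D\ne\emptyset$. If $m_{(2i-1)(2i)}\notin D$ then both $m_{0(2i-1)},m_{0(2i)}\in D$; if $m_{(2i-1)(2i)}\in D$ then its own self-domination (all its $M$-neighbors lie in $P_i$) forces a further petal vertex into $D$. Hence $|D|\ge 2n$, and if $|D|=2n$ all inequalities are tight, giving $v_0\notin D$ and $|D\cap P_i|=2$ for every $i$. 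Then run a dichotomy: if some petal has $m_{(2i-1)(2i)}\notin D$, then $\{v_{2i-1},v_{2i},m_{(2i-1)(2i)}\}$ is a component of $M(F_n)-D$ cut off from the rest (its only external $M$-neighbors $m_{0(2i-1)},m_{0(2i)}$ lie in $D$), which has three vertices while $|M(F_n)-D|=3n+1>3$ for $n\ge 2$, contradicting connectedness; if instead $m_{(2i-1)(2i)}\in D$ for every $i$, writing $w_i$ for the second element of $D\cap P_i$, the choice $w_i\in\{m_{0(2i-1)},m_{0(2i)}\}$ isolates $v_{2i-1}$ or $v_{2i}$ in $M(F_n)-D$, so every $w_i\in\{v_{2i-1},v_{2i}\}$; but then no spoke $m_{0j}$ lies in $D$, and since the $m_{0j}$ are the only $M$-neighbors of $v_0$, this contradicts total domination of $v_0$.

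The main obstacle is keeping the lower-bound dichotomy clean, specifically the observation that even one ``Case 1'' petal (with $m_{(2i-1)(2i)}\notin D$) already produces an isolated three-vertex component cut off from $v_0$ by the two spoke vertices in $D$, so no mixing of the two cases can rescue $|D|=2n$.
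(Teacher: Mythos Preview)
Your proposal is correct and follows essentially the same approach as the paper: the explicit set $D=\{v_{2i-1},m_{(2i-1)(2i)}:1\le i\le n\}\cup\{m_{01}\}$ is exactly the paper's construction, and the lower bound via the per-petal count $|D\cap P_i|\ge 2$ matches the paper's argument. Your handling of the $|D|=2n$ contradiction is organized as a global dichotomy on whether every $m_{(2i-1)(2i)}$ lies in $D$, whereas the paper instead fixes one petal containing a spoke $m_{0j}\in D$ (needed to dominate $v_0$) and runs the case analysis there; the two are minor rearrangements of the same local observations, and your version is arguably cleaner.
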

\begin{proof} To fix the notation, assume $V(F_n)=\{v_0,v_1,\dots, v_{2n}\}$ and $E(F_n)=\{v_0v_1,v_0v_2,\dots, v_0v_{2n}\}\cup\{v_1v_2, v_3v_4,\dots,v_{2n-1}v_{2n}\}$. Then $V(M(F_n))=V(F_n)\cup \mathcal{M}$, where $\mathcal{M}=\{ m_{0i}~|~1\leq i \leq 2n \}\cup\{ m_{i(i+1)}~|~1\leq i \leq 2n-1 \text{ and } i \text{ is odd}\}$. 
	
	Let $D$ be a total outer-connected dominating set of $M(F_n)$. Set $D_i=V(M(F_n))\setminus\{v_0\}$.  
	Since $D$ is a total dominating set of $M(F_n)$, we have $|D \cap D_i| \ge 2$, because, otherwise there exists at least a vertex $v \in D_i$ such that $N_{M(F_n)}(v) \cap D=\emptyset$ which is a contradiction. Since for every $i,j$ such that $1\leq i,j \leq 2n-1$ and $i,j$ are odd $D_i \cap D_j = \emptyset$, we have $|D \cap (\bigcup\limits_{i=1}^{2n-1} D_i)| \ge 2n$ where $1\leq i \leq 2n-1$ and $i$ is odd. Without loss of generality, let $|D \cap (\bigcup\limits_{i=1}^{2n-1} D_i)| = 2n$. This implies that $v_{0} \notin D$. Obviously, $N_{M(F_n)}(v_{0}) \cap D \subseteq \{m_{0i}~|~1\leq i \leq 2n\}$. Without loss of generality, we may assume that $m_{01} \in N_{M(F_n)}(v_{0}) \cap D$. Then  $|D \cap D_1| =2$ and $N_{M(F_n)}(m_{01})\cap D \subseteq \{v_{1}, m_{12}, m_{02}\}$, implies that if $v_1 \in D_1$, then $N_{M(F_n)}(v_{2}) \cap D=\emptyset$, which is a contradiction. otherwise, if $m_{12} \in D_1$ or $m_{02} \in D_1$, then $M(F_n)-D$ is disconnected, which is a contradiction. Hence $|D| \ge 2n+1$. Therefor $\gamma _{tc}(M(F_n))\ge 2n+1$.
	\vskip 0.02cm
	 Now since  $D=\{v_i,  m_{i(i+1)}~|~1\leq i \leq 2n-1 \text{ and } i \text{ is odd}\}\cup\{m_{01}\}$ is a total outer-connected dominating set for $M(F_n)$ with $|D|=2n+1$, we have $\gamma _{tc}(M(F_n))= 2n+1$.  
\end{proof}
\section{Nordhaus-Gaddum-like relations}

Finding a Nordhaus-Gaddum-like relation for any parameter in graph theory is one of the traditional results which started after the work of Nordhaus and Gaddum \cite{Nordhaus}.


\vskip 0.02cm

As an immediate consequence of Theorem \ref{theo:TOCD<n+m-1}, we have the following result.
\begin{Proposition} \label{gamma _{tc}(M(overline{G}))}
Let $G$ be a connected graph of order $n \ge 4$ such that $\overline{G}$ is a connected graph, where $\overline{G}$ is the complement of $G$. Then $$2|\leaf(\overline{G})| \le\gamma _{tc}(M(\overline{G}))\le n+\dfrac{n(n-1)}{2}-m-1$$
where $n=|V(G)|$ and $m=|E(G)|$.	
\end{Proposition}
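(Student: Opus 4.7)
The plan is to apply Theorem \ref{theo:TOCD<n+m-1} directly to $\overline{G}$ in place of $G$. First I would verify that the hypotheses of that theorem are met: by assumption $\overline{G}$ is connected and has $n\ge 4\ge 3$ vertices, so Theorem \ref{theo:TOCD<n+m-1} is applicable to $\overline{G}$.

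Next I would compute the size of $\overline{G}$ in terms of the data of $G$. Since $\overline{G}$ has the same vertex set as $G$ and its edges are exactly the non-edges of $G$, we have
$$|V(\overline{G})|=n,\qquad |E(\overline{G})|=\binom{n}{2}-|E(G)|=\frac{n(n-1)}{2}-m.$$
Substituting $\overline{G}$ into the conclusion of Theorem \ref{theo:TOCD<n+m-1} (with its ``$n$'' and ``$m$'' replaced by these quantities) then yields
$$2|\leaf(\overline{G})|\le \gamma_{tc}(M(\overline{G}))\le |V(\overline{G})|+|E(\overline{G})|-1 = n+\frac{n(n-1)}{2}-m-1,$$
which is exactly the required inequality.

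There is essentially no obstacle here; the only content beyond Theorem \ref{theo:TOCD<n+m-1} is the bookkeeping identity $|E(\overline{G})|=\binom{n}{2}-m$ and the observation that the connectedness hypothesis on $\overline{G}$ (explicitly assumed in the statement) is precisely what the earlier theorem requires. Thus the proof is a one-line invocation of the previous bound applied to the complement.
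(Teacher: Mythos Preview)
Your proposal is correct and matches the paper's approach exactly: the paper states this proposition as an immediate consequence of Theorem~\ref{theo:TOCD<n+m-1} applied to $\overline{G}$, with no further argument given. Your only addition is the explicit bookkeeping $|E(\overline{G})|=\binom{n}{2}-m$, which is precisely the substitution needed.
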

By Theorem \ref{theo:TOCD<n+m-1} and Proposition \ref{gamma _{tc}(M(overline{G}))} we have the following result.
\begin{Theorem} \label{nordGANDGCOMP}
	Let $G$ be a connected graph of order $n \ge 4$	such that $\overline{G}$ is a connected graph. Then 
	$$2|\leaf(G)|+2|\leaf(\overline{G})| \le \gamma _{tc}(M(G))+\gamma _{tc}(M(\overline{G}))\le \dfrac{n^{2}+3n-4}{2}.$$
\end{Theorem}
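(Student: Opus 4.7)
The plan is a direct combination of the two cited bounds. Since $G$ is connected with $n \ge 4 \ge 3$ vertices, Theorem~\ref{theo:TOCD<n+m-1} applies to $G$ and gives
\[
2|\leaf(G)| \;\le\; \gamma_{tc}(M(G)) \;\le\; n + m - 1.
\]
Under the assumption that $\overline{G}$ is also connected, Proposition~\ref{gamma _{tc}(M(overline{G}))} provides the analogous bound
\[
2|\leaf(\overline{G})| \;\le\; \gamma_{tc}(M(\overline{G})) \;\le\; n + \tfrac{n(n-1)}{2} - m - 1,
\]
obtained by applying Theorem~\ref{theo:TOCD<n+m-1} to $\overline{G}$, whose order is $n$ and whose size is $\binom{n}{2} - m$.

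For the lower bound of the theorem I would simply add the two left-hand inequalities, obtaining
\[
\gamma_{tc}(M(G)) + \gamma_{tc}(M(\overline{G})) \;\ge\; 2|\leaf(G)| + 2|\leaf(\overline{G})|.
\]
For the upper bound I would add the two right-hand inequalities; the terms involving $m$ cancel, leaving
\[
\gamma_{tc}(M(G)) + \gamma_{tc}(M(\overline{G})) \;\le\; 2n + \tfrac{n(n-1)}{2} - 2 \;=\; \tfrac{n^{2} + 3n - 4}{2},
\]
which is the claimed bound.

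There is no real obstacle here: the result is an immediate consequence of Theorem~\ref{theo:TOCD<n+m-1} applied twice (once to $G$ and once to $\overline{G}$, the latter already packaged as Proposition~\ref{gamma _{tc}(M(overline{G}))}), and the only genuine step is the elementary algebraic simplification of $2n + \binom{n}{2} - 2$ into $(n^{2} + 3n - 4)/2$.
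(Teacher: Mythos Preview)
Your argument is correct and is exactly the approach the paper takes: the theorem is stated there as an immediate consequence of Theorem~\ref{theo:TOCD<n+m-1} and Proposition~\ref{gamma _{tc}(M(overline{G}))}, and your addition of the two pairs of inequalities together with the simplification $2n+\tfrac{n(n-1)}{2}-2=\tfrac{n^2+3n-4}{2}$ fills in precisely the details the paper omits.
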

%
If we assume that the graph is a tree, we can improve on Theorem~\ref{nordGANDGCOMP}. Notice that if $T=K_{1,n-1}$, then $\overline{T}$ is not connected.
By Theorem \ref{T neq K_{1,n-1}} and Proposition \ref{gamma _{tc}(M(overline{G}))} we have the following result.
\begin{Theorem}\label{nordtree}
Let $T$ be a tree of order $n \ge 4$ such that $\overline{T}$ is connected. Then 
$$2|\leaf(T)|+2|\leaf(\overline{T})| \le \gamma _{tc}(M(T))+\gamma _{tc}(M(\overline{T}))\le \dfrac{n^{2}+3n-8}{2}.$$	
\end{Theorem}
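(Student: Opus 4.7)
The plan is to obtain both inequalities by directly combining the bounds already established in the paper, with a small preliminary observation to rule out the star case.

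For the lower bound, the hypothesis that both $T$ and $\overline{T}$ are connected (with $n \ge 4 \ge 3$) lets me apply Theorem \ref{theo:TOCD<n+m-1} separately to $T$ and to $\overline{T}$, obtaining $\gamma_{tc}(M(T)) \ge 2|\leaf(T)|$ and $\gamma_{tc}(M(\overline{T})) \ge 2|\leaf(\overline{T})|$, then add. Nothing more is needed here.

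For the upper bound, first I would note that $T \neq K_{1,n-1}$: if $T$ were a star, the central vertex would be isolated in $\overline{T}$, contradicting the connectivity of $\overline{T}$. Hence Theorem \ref{T neq K_{1,n-1}} gives $\gamma_{tc}(M(T)) \le 2n-4$. For the complement, I would apply Proposition \ref{gamma _{tc}(M(overline{G}))} with $|V(T)| = n$ and $|E(T)| = n-1$ (since $T$ is a tree), yielding
\begin{equation*}
\gamma_{tc}(M(\overline{T})) \le n + \frac{n(n-1)}{2} - (n-1) - 1 = \frac{n(n-1)}{2}.
\end{equation*}

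Summing these two estimates gives
\begin{equation*}
\gamma_{tc}(M(T)) + \gamma_{tc}(M(\overline{T})) \le (2n-4) + \frac{n(n-1)}{2} = \frac{n^{2}+3n-8}{2},
\end{equation*}
as required. There is essentially no obstacle in this proof; the only subtle point is the exclusion of $T = K_{1,n-1}$, which is forced by the assumption that $\overline{T}$ is connected and is precisely what allows us to use the sharper bound $2n-4$ from Theorem \ref{T neq K_{1,n-1}} in place of the weaker $2n-2$ from Corollary \ref{leaf(T) le gamma _{tc}(M(T)) le 2n-2}, ensuring the improved constant $-8$ (versus the $-4$ one would otherwise get).
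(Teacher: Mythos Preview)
Your proposal is correct and follows essentially the same approach as the paper: the paper also first observes that $T=K_{1,n-1}$ would make $\overline{T}$ disconnected, then derives the result directly from Theorem~\ref{T neq K_{1,n-1}} and Proposition~\ref{gamma _{tc}(M(overline{G}))} (the latter being Theorem~\ref{theo:TOCD<n+m-1} applied to $\overline{T}$), exactly as you do.
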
	
\begin{Remark}
Since $\overline{P_4}$ is isomorphic to $P_4$, by Theorem \ref{tocdpn=2n-4}, the lower bounds in Theorems \ref{nordGANDGCOMP} and \ref{nordtree} are tight when $G=P_4$.
\end{Remark}

\end{document}